\newtheorem{theorem}{Theorem}[section]
\newtheorem{corollary}[theorem]{Corollary}
\newtheorem{claim}{Claim}
\newtheorem{step}{Step}
\newtheorem*{thmA}{Main Theorem}
\theoremstyle{definition}
\newtheorem{definition}[theorem]{Definition}
\theoremstyle{remark}
\numberwithin{equation}{section}
\newcommand{\End}{\mathrm{End}}
\newcommand{\Tnk}{T^{(n(k))}}
\newcommand{\Pnk}{P^{(k)}}
\newcommand{\Tnkp}{T^{(n(k+1))}}
\newcommand{\Geh}{\mathcal{G}}
\newcommand{\fTk}{\mathcal{T}_k}
\newcommand{\fPk}{\mathcal{P}_k}
\newcommand{\fTkp}{\mathcal{T}_{k+1}}
\newcommand{\fTkm}{\mathcal{T}_{k-1}}
\newcommand{\fPkp}{\mathcal{P}_{k+1}}
\newcommand{\fPkm}{\mathcal{P}_{k-1}}
\newcommand{\fRk}{\mathcal{R}_k}
\newcommand{\fRkm}{\mathcal{R}_{k-1}}
\newcommand{\fSk}{\mathcal{S}_k}
\newcommand{\fSkm}{\mathcal{S}_{k-1}}
\newcommand{\wkp}{w_{k+1}}
\newcommand{\vkm}{v_{k-1}}
\newcommand{\rts}{\mathbf{R}}
\newcommand{\tF}{\tilde{F}}
\newcommand{\Tom}{T^{\omega}}
\newcommand{\bfonk}{\mathbf{1}(k)}
\newcommand{\Tonk}{T^{(\bfonk)}}
\newcommand{\firsttree}{{\mathbf{0}(k)}}
\newcommand{\lasttree}{{\mathbf{1}(k)}}
\newcommand{\firsttreeplus}{\mathbf{0}(k+1)}
\newcommand{\lasttreeplus}{\mathbf{1}(k+1)}
\newcommand{\firsttreeminus}{\mathbf{0}(k-1)}
\newcommand{\lasttreeminus}{\mathbf{1}(k-1)}
\newcommand{\Tak}{T^{\firsttree}}
\newcommand{\Takp}{T^{\firsttreeplus}}
\newcommand{\pok}{p^{\lasttree}}
\newcommand{\aok}{a^{\lasttree}}
\newcommand{\bok}{b^{\lasttree}}
\newcommand{\baok}{\bar{a}^{\lasttree}}
\newcommand{\baokp}{\bar{a}^{\lasttreeplus}}
\newcommand{\bbok}{\bar{b}^{\lasttree}}
\newcommand{\bdok}{\bar{d}^{\lasttree}}
\newcommand{\bdokm}{\bar{d}^{\lasttreeminus}}
\newcommand{\beok}{\bar{e}^{\lasttree}}
\newcommand{\restri}{\upharpoonright}
\DeclareMathOperator{\diam}{diam}
\DeclareMathOperator{\interior}{int}
\renewcommand{\epsilon}{\varepsilon}
\begin{document}

\title{On entropy of pure mixing maps on dendrites}


\author{Dominik Kwietniak}
\address[D. Kwietniak]{Jagiellonian University in Kraków, Faculty of Mathematics and Computer Science, ul. Łojasiewicza 6, 30-348 Kraków, Poland.}
\curraddr{}
\email{dominik.kwietniak@uj.edu.pl}
\urladdr{https://www2.im.uj.edu.pl/DominikKwietniak/}
\thanks{}

\author{Piotr Oprocha}
\address[P.\ Oprocha]{Centre of Excellence IT4Innovations - Institute for Research and Applications of Fuzzy Modeling, University of Ostrava, 30. dubna 22, 701 03 Ostrava 1, Czech Republic}
\email{piotr.oprocha@osu.cz}

\author{Jakub Tomaszewski}
\address[J. Tomaszewski]{AGH University of Krakow, Faculty of Applied Mathematics,
al.\ Mickiewicza 30, 30-059 Krak\'ow, Poland. -- $\&$ --
University of Maryland, Department of Mathematics, William E. Kirwan Hall, 4176 Campus Dr., College Park, MD 20742 USA.}
\curraddr{}
\email{tomaszew@agh.edu.pl}

\subjclass[2010]{Primary 37B45; Secondary 54H20}
\keywords{Topological entropy,  Gehman dendrite, topological mixing}

\date{}

\dedicatory{}


\begin{abstract}
For every $0<\alpha\le\infty$ we construct a continuous pure mixing map (topologically mixing, but not exact) on the Gehman dendrite with topological entropy $\alpha$. It has been previously shown by Špitalský that there are exact maps on the Gehman dendrite with arbitrarily low positive topological entropy. Together, these results show that the entropy of maps on the Gehman dendrite does not exhibit the paradoxical behaviour reported for graph maps, where the infimum of the topological entropy of exact maps is strictly smaller than the infimum of the entropy of pure mixing maps. The latter result, stated in terms of popular notions of chaos, says that for maps on graphs, lower entropy implies stronger Devaney chaos. The conclusion of this paper says that lower entropy does not force stronger chaos for maps of the Gehman dendrite.
\end{abstract}

\maketitle

\section{Introduction} The topological entropy is arguably the most important invariant in topological dynamics. A priori, the entropy $h(f)$ of a topological dynamical system $(X,f)$ may reach any value in the extended interval $[0,\infty]$ (for definitions, see the next section). However, there are classes of topological dynamical systems whose properties restrict the attainable values of topological entropy. For example, for expansive systems the entropy must be finite. For one-dimensional continua, such as the interval $[0,1]$, some dynamical properties may restrict the set of possible values of topological entropy. For example, for a topologically transitive interval map $f$ we have $h(f)\in[\log (2)/2,\infty]$. Therefore, the general problem we want to study here is the following: \emph{Given a continuum $X$ and a class $\mathcal{X}$ of continuous maps from $X$ to itself, investigate the set $h(\mathcal{X})=\{h(f):f\in\mathcal{X}\}$ of possible values of topological entropy for maps in $\mathcal{X}$.} This can be seen as an instance of Anatole Katok's \emph{flexibility program} \cite{EK, Erc}. The latter is a research programme in dynamical systems theory that is inspired by Katok's work from the 1980s--2000s. Note that Katok did not typically use the ``flexibility programme'' as a formal label in his articles (\cite{EK} is an exception). The concept of flexibility developed gradually through Katok's work, with the term becoming more commonly used to describe his approach retrospectively by the dynamical systems community.

The programme is summarised in \cite[page 633]{EK}: 

\begin{quote}
Under properly understood general restrictions within a fixed class of smooth dynamical systems, dynamical invariants, both quantitative and qualitative, take arbitrary values.    
\end{quote}

In particular, the explorations of connections between transitivity, density of the set of periodic points, and topological entropy for low-dimensional continuous maps fit into Katok's programme. These connections were a subject of intensive studies even before Katok formulated the idea of flexibility as a general research programme in dynamics. See \cite{AKLS,ABLM,Baldwin,KM, Spitalsky, VS-DCDS, Ye} and references therein.

Here, we concentrate our efforts on proving the flexibility of entropy for a particular dendrite---the Gehman dendrite \cite{Gehman}---and a particular class of continuous maps from $\Geh$ to itself---the class of pure mixing maps of $\Geh$ (maps that are mixing but not exact). 

Dendrites form a class of compact connected metric spaces (continua) that include all trees. 
A dendrite is a non-degenerate locally connected continuum without a subset homeomorphic to a simple closed curve. Like trees, dendrites have the fixed point property, are absolute retracts, and can be embedded in the plane, but they can be more complex: some dynamical phenomena appear on dendrites that are not possible on trees (e.g., see \cite{Byszewski,HM} for an example of a weakly mixing, not mixing dendrite map).
Dendrite dynamics has become a popular research area recently. 
Dynamics on dendrites serves as a transition zone between one-dimensional and higher-dimensional dynamics. Dendrites display enough complexity to exhibit some higher-dimensional phenomena while remaining analytically tractable because of their fundamentally one-dimensional nature. The Gehman dendrite seems to be the simplest one that allows such a construction. In fact, one can note that the combination of the construction presented here with some tools presented in \cite{Dominik_drzewa,Spitalsky,VS-JMAA} should lead to analogous results for any dendrite with an infinite set of endpoints.
On the other hand, there exists a zero entropy transitive map of the Ważewski's universal dendrite \cite{Byszewski,HM}.

Our interest in pure mixing maps is caused by the following observations. For a general topological dynamical systems given by a continuous map $f\colon X\to X$ acting on a compact metric space we have the following chain of implications
\begin{equation*}
\text{$f$ is exact}\implies\text{$f$  is mixing}\implies\text{$f$ is transitive},
\end{equation*}
where exactness, mixing, and transitivity are properties associated with a nontrivial global  dynamics. Furthermore, on dendrites containing a free arc, in particular, on all trees and the Gehman dendrite (see \cite{Dirbak}), transitivity implies that the set of periodic points of $f$ is dense. These observations impose
a hierarchy of chaotic properties  (variants of Devaney chaos) discussed in more detail in \cite{KM,Dominik_odcinek}.

In particular, one can argue that exact systems exhibit more complex behaviour than mixing ones. This leads to expectation that there should be  more restrictions for possible values of topological entropy of exact maps than for entropy of mixing but not exact (pure mixing) maps. Indeed, the entropy of pure mixing maps of the Cantor set can take any value in $[0,\infty]$, while exact maps always have positive entropy. However, the authors of \cite{Dominik_odcinek} have shown that for pure mixing interval maps, the set of possible values of entropy is the interval $(\log (3)/2,\infty]$, while the entropy of the exact maps achieves any value in $(\log (2)/2,\infty]$. In \cite{Dominik_drzewa} it was proved that the same paradoxical situation holds for topological trees and other spaces (see \cite{Dominik_drzewa} for more details): the set of possible values of entropy for maps that are \emph{more} chaotic in the hierarchy contains smaller values than the analogous set for \emph{less} chaotic maps.  Roughly speaking, sufficiently low entropy implies stronger chaos. This leads to a question considered here: What is the set of possible values of entropy of pure mixing maps of the Gehman dendrite?

It is known that on the Gehman dendrite the entropy of any transitive (hence, also pure mixing or exact) map of $\Geh$ must be positive; see \cite[Theorem C]{Dirbak}.
Also, the entropy of an exact map on $\Geh$ can be arbitrarily low; see \cite[Theorem A]{Spitalsky}. An easy modification of this reasoning shows that the topological entropy of the exact maps on the Gehman dendrite can take any value in $(0,\infty]$.
We will show that the set of possible values of entropy for pure mixing maps is the same.

\begin{thmA}[Theorem \ref{thm:main} below]\label{thm:B}
    For each $\alpha\in(0, \infty]$ there exists a pure mixing map $F_\alpha\colon\mathcal{G}\to\mathcal{G}$ on the Gehman dendrite such that $h(F_\alpha)=\alpha$.
\end{thmA}

Thus, there is no entropy paradox on the Gehman dendrite: The infima of the entropies of transitive maps, pure mixing maps, and exact maps are equal $0$.


\section{Preliminaries}
\subsection{The Gehman dendrite}\label{sec:setup}
A \emph{continuum} is a compact, connected metric space. A continuum is non-degenerate if it contains at least two points.
A \emph{dendrite} is a non-degenerate locally connected continuum without a subset homeomorphic to a simple closed curve. An \emph{arc} in a continuum $X$ is a set $A\subseteq X$ that is a homeomorphic copy of the interval $[0,1]$. In other words, $A$ is an arc if there is $\varphi\colon [0,1]\to X$ which is continuous, injective, and $\varphi([0,1])=A$.  

In this situation, we call points $\varphi(0)$ and $\varphi(1)$ the \emph{endpoints} of $A$.
A tree is a dendrite that can be written as a finite union of  arcs in such a way that each pair of arcs (definitely) has at most one point in common.

For a dendrite $X$, we write $\End(X)$ for the set of its endpoints (points $x\in X$ such that $X\setminus\{x\}$ is connected) and $B(X)$ for its \emph{branch points} or \emph{vertices} (points $x\in X$ such that $X\setminus\{x\}$ has at least three components). In any dendrite, $B(X)$ is always at most countable and is empty if and only if $X$ is an arc, while $\End(X)$ is always nonempty. An arc $A\subseteq X$ is a \emph{free arc} if all points in $A$ except possibly the endpoints of $A$ are not branch points  in $X$.

Recall that \emph{Gehman dendrite} $\Geh$ is a dendrite whose set of endpoints is homeomorphic to the Cantor set and whose branching points are of order $3$, that is, if $x\in \Geh$ is a branching point, then $\Geh\setminus\{x\}$ has three connected components. Gehman dendrite is unique up to a homeomorphism, see \cite[Theorem~4.1]{Charatonik}.

In the rest of the paper we will use the following notational conventions regarding the Gehman dendrite and its subtrees.
Recall that the full binary tree of height $n\ge 1$, denoted $T^{(n)}$, is the tree obtained by the inductive procedure:
Base case: $T^{(1)}$ is just the standard compact interval $[0,1]$ and its root is the point $1/2$. For $n \ge  1$, we construct $T^{(n+1)}$ as follows: take two disjoint copies of $T^{(n)}$, which we denote $T^{(n)}_0$ and $T^{(n)}_1$. We set $T^{(n+1)}$ to be the union $T^{(n)}_0\cup T^{(n)}_1\cup[0,1]$, where we identify the point $0\in  [0,1]$ with the root of $T^{(n)}_0$ and we identify the point  $1\in [0,1]$ with the root of $T^{(n)}_1$. We declare the point corresponding to $1/2\in[0,1]$ the root of $T^{(n+1)}$. For each $n\ge 1$ we label the vertices of $T^{(n)}$ with binary words in the standard way. In particular, we write $c_\lambda^{(n)}$, where $\lambda$ stands for the empty word, for the root and 
 $\End(T^{(n)})$ for the set of endpoints of $T^{(n)}$, that is,  
$\End(T^{(n)})=\{c^{(n)}_\omega:\omega\in\{0,1\}^{n}\}$. See Figure \ref{fig:T3}.

\begin{figure}[H]
    \centering
    \begin{tikzpicture}[
    level/.style={sibling distance=50mm/#1},
    level distance=12mm,
    inner node/.style={circle, fill=black, minimum size=2mm, inner sep=0pt},
    leaf node/.style={circle, fill=black, minimum size=2mm, inner sep=0pt}
]

\node[inner node, label={[label distance=1mm]right:$\lambda$}] (root) {}
    child {node[inner node, label={[label distance=1mm]left:0}] (0) {}
        child {node[inner node, label={[label distance=1mm]left:00}] (00) {}
            child {node[leaf node, label={[label distance=1mm]below:000}] (000) {}}
            child {node[leaf node, label={[label distance=1mm]below:001}] (001) {}}
        }
        child {node[inner node, label={[label distance=1mm]right:01}] (01) {}
            child {node[leaf node, label={[label distance=1mm]below:010}] (010) {}}
            child {node[leaf node, label={[label distance=1mm]below:011}] (011) {}}
        }
    }
    child {node[inner node, label={[label distance=1mm]right:1}] (1) {}
        child {node[inner node, label={[label distance=1mm]left:10}] (10) {}
            child {node[leaf node, label={[label distance=1mm]below:100}] (100) {}}
            child {node[leaf node, label={[label distance=1mm]below:101}] (101) {}}
        }
        child {node[inner node, label={[label distance=1mm]right:11}] (11) {}
            child {node[leaf node, label={[label distance=1mm]below:110}] (110) {}}
            child {node[leaf node, label={[label distance=1mm]below:111}] (111) {}}
        }
    };

\end{tikzpicture}    
    \caption{The tree $T^{(3)}$ with the standard labelling of its vertices. 
    }
\label{fig:T3}
\end{figure}

Similarly, the Gehman dendrite can be pictured as an infinite binary tree where each vertex (except the root) has exactly one parent and each vertex has exactly two children. We use finite binary words to label the vertices. First, we label the root vertex with the empty word $\lambda$, that is, we let $c_\lambda$ to be the root of $\Geh$. 
For any vertex $c_w$ of $\Geh$ labelled with binary word $w$, we label its left child as $w0$ (append $0$ to the word $w$) and we label its right child as $w1$ (append $1$ to the word $w$). That is, at level $n$, there are $2^n$ vertices $c_\omega$, each labelled with a binary word of length $n$.


\subsection{Notions from topological dynamics}\label{complexity_classification}

Let $(X,f)$ be a topological dynamical system (a TDS for short). It means that $X$ is a compact metric space and $f\colon X\to X$ is a continuous surjection. We call a closed nonempty set $A\subseteq X$ such that
$f(A)=A$ a \emph{subsystem} of $(X,f)$. If $A$ is a subsystem then $(A,f|_A)$ is a TDS.

Let $(X,f)$ and $(Y,g)$ be two TDS. We say $(Y,g)$ is a \emph{factor} of $(X,f)$ (and we call $(X,f)$ an \emph{extension} of $(Y,g)$) if there exists a \emph{factor map}, that is, a continuous surjection $\varphi\colon X\to Y$ such that $\varphi\circ f=g\circ\varphi$. If $\varphi$ is a homeomorphism, then we say that $(X,f)$ and $(Y,g)$ are \emph{conjugate}.

\begin{definition}
A TDS $(X,f)$ is: 

\begin{itemize}
\item \emph{transitive} if for every $U,V\subseteq X$ nonempty and open there is $n\in\mathbb{N}$ such that $f^{n}(U)\cap V\neq\emptyset$;
    \item 
\emph{(topologically) mixing} if for every  $U,V\subseteq X$ nonempty and open there exists $n_0\in\mathbb{N}$ such that for all $n\geq n_0$ we have $f^{n}(U)\cap V\neq\emptyset$;
\item \emph{exact} if for each open $\emptyset\neq U\subseteq X$ there is $n\in\mathbb{N}$ such that $f^{n}(U)=X$;
\item \emph{pure mixing} if it is mixing but not exact.
\end{itemize}
\end{definition}

For simplicity, we say that $f\colon X\to X$ is transitive/(pure) mixing/exact, if the TDS $(X,f)$ is (pure) mixing/exact.

By $h(f)$ we denote the \emph{topological entropy} of TDS $(X, f)$. For definition and further details, see \cite[Chapter 14]{Denker}. Topological entropy is a numerical invariant ($h(f)\in[0,\infty]$) of conjugacy of TDS. Here we will list only these properties of entropy that we need to determine the entropy of our examples.
We will use these facts without further notice.
\begin{theorem}
   The topological entropy $h(f)$ of a TDS $(X,f)$ has the following properties:
   \begin{enumerate}
       \item If $(Y, g)$ is a factor of $(X, f)$, then $h(g)\leq h(f)$. 
       If, in addition, there is $N\ge 1$ such that every point in $Y$ has at most $N$ preimages through the factor map, then $h(f)=h(g)$.
       In particular, $h(f)=h(g)$ if $f$ and $g$ are conjugate.
       \item If $A\subseteq X$ is a subsystem of $(X, f)$, then $h(f|_A)\leq h(f)$.
       \item If $n\ge 1$, then $h(f^n)=nh(f)$.
       \item Let $I$ be a nonempty set of indices. If for every $i\in I$ the set $X_i\subseteq X$ is a subsystem of $(X,f)$ and $X=\bigcup_{i\in I} X_i$, then
       \begin{equation*}
           h(f)=\sup_{i\in I}h(f|_{X_i}).
       \end{equation*}
   \end{enumerate}
\end{theorem}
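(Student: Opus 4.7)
The plan is to verify each of the four items using whichever of the two standard equivalent definitions of topological entropy is most convenient: the Adler--Konheim--McAndrew open-cover definition $h(f,\mathcal{U}) = \lim_n \tfrac{1}{n}H\bigl(\bigvee_{k=0}^{n-1}f^{-k}\mathcal{U}\bigr)$ with $h(f)=\sup_{\mathcal{U}}h(f,\mathcal{U})$ ranging over finite open covers $\mathcal{U}$, or the Bowen--Dinaburg definition in terms of $(n,\varepsilon)$-separated (or $(n,\varepsilon)$-spanning) sets. All four assertions are classical and appear in \cite{Denker}; the proof is a short compilation.

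For (1), the inequality $h(g)\le h(f)$ follows from the open-cover definition: for any finite open cover $\mathcal{U}$ of $Y$, the pullback $\varphi^{-1}\mathcal{U}$ is a finite open cover of $X$, and the intertwining $\varphi\circ f = g\circ \varphi$ yields the identity $\bigvee_{k=0}^{n-1} f^{-k}\varphi^{-1}\mathcal{U}=\varphi^{-1}\bigl(\bigvee_{k=0}^{n-1}g^{-k}\mathcal{U}\bigr)$. Surjectivity of $\varphi$ then gives $H\bigl(\bigvee g^{-k}\mathcal{U}\bigr)\le H\bigl(\bigvee f^{-k}\varphi^{-1}\mathcal{U}\bigr)$; dividing by $n$, passing to the limit, and then taking the supremum over $\mathcal{U}$ gives $h(g)\le h(f)$. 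For the bounded-to-one case I would invoke Bowen's fibre inequality $h(f)\le h(g)+\sup_{y\in Y}h(f,\varphi^{-1}(y))$: the fibres have cardinality at most $N$ and hence zero topological entropy, so $h(f)\le h(g)$, which combined with the first half yields equality. The conjugacy statement is the special case of singleton fibres.

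Item (2) is immediate from the Bowen definition, since any $(n,\varepsilon)$-separated subset of $A$ for $f|_A$ is also $(n,\varepsilon)$-separated in $X$ for $f$. Item (3) is a standard open-cover computation: setting $\mathcal{V}=\bigvee_{k=0}^{n-1}f^{-k}\mathcal{U}$, the refinement $\bigvee_{k=0}^{mn-1}f^{-k}\mathcal{U}$ is sandwiched between $\bigvee_{j=0}^{m-1}(f^n)^{-j}\mathcal{U}$ and $\bigvee_{j=0}^{m-1}(f^n)^{-j}\mathcal{V}$, and dividing by $m$, passing to the limit, and supremising over $\mathcal{U}$ yields $h(f^n)=n\,h(f)$. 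For (4), the inequality $h(f)\ge\sup_i h(f|_{X_i})$ is a direct instance of (2); for the reverse direction I would partition any $(n,\varepsilon)$-separated set $E\subseteq X$ as $E=\bigsqcup_i E_i$ with $E_i\subseteq X_i$, so that each $E_i$ is $(n,\varepsilon)$-separated for $f|_{X_i}$, and for finite $I$ this gives $|E|\le |I|\cdot\max_i s(n,\varepsilon,f|_{X_i})$ with the $\log|I|/n$ correction vanishing in the limit. The only genuinely nontrivial step is Bowen's fibre inequality in the second half of (1); the remaining items are elementary bookkeeping with whichever definition of entropy is adopted.
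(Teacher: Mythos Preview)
The paper does not prove this theorem: it is stated as a list of standard facts with a reference to \cite{Denker} and then used without further comment, so there is no paper proof to compare against. Your sketches for (1)--(3) are the usual arguments and are fine. The gap is in (4): you establish the nontrivial inequality $h(f)\le\sup_i h(f|_{X_i})$ only for \emph{finite} $I$, via the pigeonhole bound $|E|\le|I|\cdot\max_i s(n,\varepsilon,f|_{X_i})$, but the theorem is stated for an arbitrary index set, and the paper actually applies it to a countably infinite cover (writing $\Geh=\End(\Geh)\cup\bigcup_{k\ge1}\fTk$ when computing $h(G)$). For infinite $I$ the correction $\log|I|/n$ is no longer finite and your counting estimate breaks down.

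One clean fix, valid for arbitrary $I$, goes through the variational principle: any ergodic $f$-invariant probability measure $\mu$ has a generic point $x$ lying in some $X_{i_0}$; since $X_{i_0}$ is closed and forward invariant, the orbit closure of $x$---and hence $\mathrm{supp}\,\mu$---is contained in $X_{i_0}$, so $h_\mu(f)\le h(f|_{X_{i_0}})\le\sup_i h(f|_{X_i})$. Ergodic decomposition and the variational principle then give $h(f)\le\sup_i h(f|_{X_i})$.
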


To state the next result we need the definition of Hausdorff dimension. For further details, see \cite[Chapter 1]{Bishop}.
\begin{definition}
 Let $(X,d)$ be a metric space and $S\subset X$. 
 We set $$\mathcal{H}^s(S)=\lim_{\delta\to 0}\inf\{\sum_{i=1}^\infty\diam(U_i)^s\colon \bigcup_{i=1}^\infty U_i\supset S, \diam U_i<\delta\}$$ to be the \emph{$s$-dimensional Hausdorff outer measure} of $S$.
We define the \emph{Hausdorff dimension} of $S$ as $$\dim_H(S)=\inf\{s\geq 0: \mathcal{H}^s(S)=0\}=\sup\{s\geq 0:\; \mathcal{H}^s(S)=\infty\}.$$
It is well known that $\mathcal{H}^s$ restricted to Borel subsets of $X$ is a measure.
\end{definition}

\begin{theorem}[{\cite[Corollary 2.2]{Misiurewicz_hausdorff}}] \label{lipschitz_entropy_graf}
Let $(X,f)$ be a TDS on a metric space $(X,d)$. If $f\colon X\to X$ is $L$-Lipschitz for some $L>1$, that is, if $d(f(x),f(y))\le Ld(x,y)$ for every $x,y\in X$,
then $\dim_H(X)\cdot \log (L)\geq h(f)$.
\end{theorem}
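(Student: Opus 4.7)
The plan is to combine the Bowen definition of topological entropy via $(n,\epsilon)$-separated sets with the Lipschitz hypothesis to transfer separation in the dynamical (Bowen) metric back to separation in the original metric $d$, and then to bound the resulting separated sets in $(X,d)$ using the $s$-dimensional Hausdorff outer measure.

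First, I would recall that
\[
h(f)=\lim_{\epsilon\to 0^+}\limsup_{n\to\infty}\frac{1}{n}\log s(n,\epsilon),
\]
where $s(n,\epsilon)$ is the largest cardinality of an $(n,\epsilon)$-separated subset $E\subseteq X$: for every pair of distinct $x,y\in E$ there is some $0\le i<n$ with $d(f^i(x),f^i(y))>\epsilon$. Fix such a set $E$ and distinct $x,y\in E$; choose the witnessing index $i<n$ and apply the Lipschitz bound $d(f^i(x),f^i(y))\le L^i d(x,y)$ to obtain
\[
d(x,y)\ >\ \epsilon L^{-i}\ \ge\ \epsilon L^{-(n-1)}.
\]
Hence every $(n,\epsilon)$-separated set is $r_n$-separated in $(X,d)$, where $r_n:=\epsilon L^{-(n-1)}$.

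Next, I would bound the cardinality of any $r$-separated subset of $(X,d)$ using the dimension. We may assume $\dim_H(X)<\infty$ (otherwise the inequality is trivial) and fix $s>\dim_H(X)$, so $\mathcal{H}^s(X)=0$. Then for any $\eta>0$ and any sufficiently small $\delta>0$ one can find a countable cover $\{U_j\}$ of $X$ with $\diam(U_j)<\delta$ and $\sum_j \diam(U_j)^s<\eta$. For $r<\delta$, any two distinct points of an $r$-separated set $E$ lie in different $U_j$'s, and a Vitali-type refinement of the cover (passing to sets of uniformly controlled diameter) yields an estimate
\[
|E|\ \le\ C_s\, r^{-s}
\]
for some constant $C_s$ and every sufficiently small $r$.

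Combining the two estimates with $r=r_n=\epsilon L^{-(n-1)}$ gives
\[
s(n,\epsilon)\ \le\ C_s\,\bigl(\epsilon L^{-(n-1)}\bigr)^{-s}\ =\ C_s\,\epsilon^{-s}\,L^{(n-1)s},
\]
so $\tfrac{1}{n}\log s(n,\epsilon)\to s\log L$ as $n\to\infty$. Therefore $h(f,\epsilon)\le s\log L$, hence $h(f)\le s\log L$, and letting $s\searrow \dim_H(X)$ yields the desired inequality $h(f)\le \dim_H(X)\cdot\log L$.

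The hard part is the second step: translating the Hausdorff-dimension hypothesis, which only controls covers with \emph{small total} $\sum_j\diam(U_j)^s$, into the packing-type bound $|E|\le C_s r^{-s}$ on $r$-separated sets. This is subtle in a general metric space, since \emph{a priori} only the upper box dimension, not $\dim_H$, controls packing numbers; a careful covering argument for $\mathcal{H}^s$ (or the use of an auxiliary spherical Hausdorff measure) is needed to close this gap, and this is exactly the content of Misiurewicz's Corollary 2.2 that we are invoking.
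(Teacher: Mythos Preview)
The paper does not prove this theorem; it is quoted from \cite{Misiurewicz_hausdorff} as an external input, so there is no in-paper proof to compare against. That said, your proposed argument contains a genuine gap that you yourself flag but do not resolve, and which in fact cannot be resolved within your framework.

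Your second step asserts that for every $s>\dim_H(X)$ there is a constant $C_s$ with $|E|\le C_s\,r^{-s}$ for every $r$-separated set $E$ and all small $r$. This is false in general: such a uniform packing bound is equivalent to $s$ exceeding the \emph{upper box} (Minkowski) dimension $\overline{\dim}_B(X)$, not the Hausdorff dimension, and the inequality $\dim_H(X)\le\overline{\dim}_B(X)$ can be strict (for instance $X=\{0\}\cup\{1/n:n\ge1\}$ has $\dim_H(X)=0$ but $\overline{\dim}_B(X)=1/2$). No ``Vitali-type refinement'' of a Hausdorff-efficient cover can manufacture a uniform packing bound, precisely because Hausdorff-efficient covers use sets of wildly varying diameters. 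As written, your argument therefore proves only the weaker (and much easier) estimate $h(f)\le\overline{\dim}_B(X)\cdot\log L$.

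The device that makes the Hausdorff-dimension bound work is to abandon separated or spanning sets of a \emph{fixed} order $n$ and instead use Bowen's Carath\'eodory-type definition of entropy, which permits covers of $X$ by Bowen balls $B_{n_j}(x_j,\epsilon)$ with \emph{varying} $n_j$. Given a cover $\{U_j\}$ with $\sum_j\diam(U_j)^t$ small for some $t>\dim_H(X)$, assign to each $U_j$ the largest $n_j$ with $L^{n_j-1}\diam(U_j)<\epsilon$; then $U_j$ sits inside a Bowen $(n_j,\epsilon)$-ball and $e^{-s n_j}$ is comparable to $(\diam(U_j)/\epsilon)^{s/\log L}$. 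Choosing $s=t\log L$ matches the Bowen weights to the Hausdorff weights and gives $h(f)\le t\log L$, hence $h(f)\le\dim_H(X)\cdot\log L$ on letting $t\searrow\dim_H(X)$. The freedom in the $n_j$ is exactly what absorbs the varying scales of a Hausdorff cover, and this is the missing idea in your uniform-$n$ approach.
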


Let $X$ be a continuum. A metric $d$ on $X$ is \emph{convex}
if for every distinct $x, y \in X$ there is $z\in X$ such that $d(x,z)=d(z,y)={
\frac{1}{2}}d(x,y)$. By \cite[Theorem 8]{Bing} every locally connected continuum admits a compatible convex metric. If $X$ is endowed with a convex metric $d$, then for every $a\neq b$ there is a connecting arc $A=[a, b]$, whose length satisfies $\mathcal{H}^1(A)=d(a,b)$; every such arc will be called \emph{geodesic}. We refer the reader to \cite{VS-JMAA} and references therein for a discussion on these matters.


Since every dendrite is locally connected, the Gehman dendrite always admits a convex metric. Conversely, given a nonatomic Borel probability measure $\mu$ on $\Geh$ that is positive on every free arc, we can define a convex metric $d_\mu$ on $\Geh$ by setting $d_\mu(x,y)=\mu([x,y]_\Geh)$, where $[x,y]_\Geh$ is the unique arc in $\Geh$ whose endpoints are $x$ and $y$.

\subsection{Entropy of tree maps}
Recall that a \emph{tree} is a dendrite that can be written as a finite union of arcs. 

We say that a continuous map $f\colon X\to Y$ between topological spaces is \emph{monotone} if for every $y\in Y$ the preimage $f^{-1}(y)$ is a connected subset of $X$.

A tree map $f\colon T\to T$ is \textit{$P$-monotone} if $P\subseteq T$ is a finite set  containing all vertices of $T$ such that for each connected component $C$ of $T\setminus P$ the map $f\colon \overline{C}\to T$ is monotone (here, $\overline{C}$ stands for the closure of $C$ in $T$). We call $\overline{C}$ a \emph{$P$-basic interval} of $f$. Observe that each connected component $C$ of $T\setminus P$ must be an open subset of $T$, as every vertex of $T$ belongs to $P$. Consequently, every $P$-basic interval is a free arc. 

There might be multiple finite sets $P$ such that given tree map $f$ is $P$-monotone.

Let $X$ be a continuum and $A\subseteq X$ be a free arc. We say that
a map $f\colon X\to X$ is \emph{linear on $A$} if there exists a constant $s\geq 0$ such that $\mathcal{H}^1(f(J))=s\cdot \mathcal{H}^1(J)$ for every arc $J\subset A$. We call $s$ the \emph{slope} of $f$ on $A$. We say that $f$ is \emph{piecewise linear} (\emph{$\sigma$-linear}) if $X$ can be written as a finite union (contains a countable dense union) of free arcs such that $f$ is linear on each of those arcs. Moreover, we say that a $\sigma$-linear map $f$ has \emph{constant slope} (\emph{bounded slope}/\emph{is expanding}) if for each of those arcs the slope is the same (is bounded by some $s_0$/is strictly larger than $1$). 

\begin{definition}
We say that a $P$-monotone tree map $f\colon T\to T$ is \emph{$P$-linear} if it is linear on each $P$-basic interval. Moreover, if $f(P)\subset P$, then we say that $f$ is a $P$-Markov map.
\end{definition}

We say that $f$ is \emph{piecewise monotone} (respectively, \emph{piecewise linear} or \emph{Markov}) if it is $P$-monotone ($P$-linear, $P$-Markov) for some finite $P\subset G$ and we do not need to specify the set $P$. In this setting, for piecewise linear maps without the specified $P$, we will refer to the ($P$)-basic intervals by simply calling them linearity intervals.

Fix $n\ge 1$. Given a binary word $\omega=\omega_0\ldots \omega_{n-1}\in\{0,1\}^n$ we may think of it as of a binary expansion of the integer $\langle \omega\rangle=\omega_0 2^0+\omega_1 2^1+\ldots+\omega_{n-1}2^{n-1}$. Note that the least significant digit is written first and we always use $n$ digits. We define $\omega\oplus 1$ to be the binary expansion of the integer $\langle\omega\rangle+1\; (\textrm{mod } 2^n)$. For example
\begin{equation*}
    000\oplus 1=100,\ 100\oplus 1=010,\ldots, 111\oplus 1=000, 
\end{equation*}
as depicted on Figure \ref{fig:oplus-action}. 
\begin{figure}[H]
    \centering

\begin{tikzpicture}[
    scale=1.5,
    node distance=2cm,
    endpoint/.style={circle, draw, minimum size=0.8cm, inner sep=0pt}
]

\node[endpoint] (000) at (0,0) {000};
\node[endpoint] (001) at (1,0) {001};
\node[endpoint] (010) at (2,0) {010};
\node[endpoint] (011) at (3,0) {011};
\node[endpoint] (100) at (4,0) {100};
\node[endpoint] (101) at (5,0) {101};
\node[endpoint] (110) at (6,0) {110};
\node[endpoint] (111) at (7,0) {111};

\draw[-{Stealth[length=2mm]}, thick, red] (000) to[bend left=30] (100);
\draw[-{Stealth[length=2mm]}, thick, red] (100) to[bend left=40] (010);
\draw[-{Stealth[length=2mm]}, thick, red] (010) to[bend left=30] (110);
\draw[-{Stealth[length=2mm]}, thick, red] (110) to[bend left=27] (001);
\draw[-{Stealth[length=2mm]}, thick, red] (001) to[bend left=30] (101);
\draw[-{Stealth[length=2mm]}, thick, red] (101) to[bend left=40] (011);
\draw[-{Stealth[length=2mm]}, thick, red] (011) to[bend left=30] (111);
\draw[-{Stealth[length=2mm]}, thick, red] (111) to[bend left=30] (000);

\end{tikzpicture}
    \caption{
     The action of $\omega\mapsto \omega\oplus 1$ operation on binary words of length $3$.}
    
    \label{fig:oplus-action}
\end{figure}
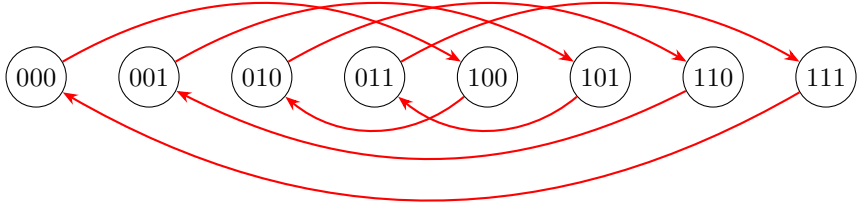

Below we reformulate
\cite[Lemma 9.2]{Dominik_drzewa} adding a corollary (Corollary \ref{cor:structureTn}) that explicitly lists some properties that follow from the proof
of Lemma 9.2 presented in \cite{Dominik_drzewa}. 
Formally, \cite[Lemma 9.2]{Dominik_drzewa} considers only a special case of Theorem \ref{odwzorowanie_drzewo_mala_entropia+}, namely only the case $\ell=1$ is considered. But the proof of \cite[Lemma 9.2]{Dominik_drzewa} can be repeated verbatim, except that if $\ell>1$ then  one must replace the initial $3$-fold (a.k.a. $3$-horseshoe) function 
by the $3^l$-fold function. 

Then one repeats the inductive proof from \cite{Dominik_drzewa} and checks that the additional claims listed in Corollary \ref{cor:structureTn} hold  (recall that $T^{(n)}$ is a topological tree defined in Section~\ref{sec:setup}).

\begin{theorem}\label{odwzorowanie_drzewo_mala_entropia+}
\cite[Lemma 9.2]{Dominik_drzewa}
For every $\epsilon>0$ and $n,\ell\ge 1$ there exists a topologically exact piecewise linear Markov map $f^{(n)}_{\epsilon,\ell}\colon T^{(n)}\to T^{(n)}$ such that
\begin{equation*}
    \frac{\ell\log (3)}{2^n}\leq h(f^{(n)}_{\epsilon,\ell})< \frac{\ell\log (3)}{2^n}+\epsilon.
\end{equation*}
\end{theorem}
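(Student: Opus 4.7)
The plan is to follow the inductive argument from \cite[Lemma 9.2]{Dominik_drzewa} on the tree height $n$, with the parameter $\ell$ absorbed into the base case. The key structural fact driving the proof is the recursive description $T^{(n+1)} = T^{(n)}_0 \cup I \cup T^{(n)}_1$, where $T^{(n)}_0,T^{(n)}_1$ are disjoint copies of $T^{(n)}$ and $I$ is a free arc joining their roots. This self-similarity supports a \emph{halving} step: given an exact piecewise linear Markov map on $T^{(n)}$ with entropy $h$, one produces a map on $T^{(n+1)}$ with entropy $h/2$. Starting from a base case map on $T^{(1)}=[0,1]$ with entropy approximately $(\ell\log 3)/2$ and applying the halving $n-1$ times yields the required map at level $n$.

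For the base case $n=1$, I would construct $f^{(1)}_{\epsilon,\ell}$ as an exact piecewise linear Markov self-map of $[0,1]$ whose second iterate contains a $3^{\ell}$-horseshoe on an invariant subinterval: this is the direct analogue of the standard $\log 3/2$ construction used in \cite{Dominik_odcinek} for $\ell=1$, with the underlying $3$-fold zig-zag replaced by a $3^{\ell}$-fold zig-zag. For the inductive step, given $g := f^{(n)}_{\epsilon/2,\ell}$ on $T^{(n)}$ and the canonical identifications $\psi_i\colon T^{(n)}\to T^{(n)}_i$ for $i\in\{0,1\}$, I set $f^{(n+1)}$ equal to $\psi_1\circ\psi_0^{-1}$ on $T^{(n)}_0$ (an isometry onto $T^{(n)}_1$) and equal to $\psi_0\circ g\circ\psi_1^{-1}$ on $T^{(n)}_1$ (a conjugate copy of $g$ followed by the identification with $T^{(n)}_0$), and extend piecewise linearly over the connecting arc $I$. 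Then $(f^{(n+1)})^2$ is conjugate to $g$ on each $T^{(n)}_i$ and contributes no extra entropy on $I$, so $h(f^{(n+1)})=h(g)/2$ lies in $\bigl[(\ell\log 3)/2^{n+1},(\ell\log 3)/2^{n+1}+\epsilon/4\bigr)$, which is stronger than required.

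The step I expect to be the main obstacle is the extension of $f^{(n+1)}$ over the central arc $I$: it must be chosen so that $f^{(n+1)}$ is simultaneously piecewise linear Markov, exact, and does not introduce extra entropy. The Markov property is achieved by enlarging the partition of $g$ by the endpoints of $I$ together with their forward orbits under $f^{(n+1)}$, and the vanishing entropy contribution on $I$ follows from choosing the slopes on $I$ to be bounded. Exactness requires more care: one uses that $g$ is exact on each copy, so any nonempty open $U\subseteq T^{(n+1)}$ eventually contains under iteration a nonempty open subset of some $T^{(n)}_i$, and then $(f^{(n+1)})^{2k}$ fills that copy for large $k$ by exactness of $g$; a further single application of $f^{(n+1)}$ swaps in the other copy, while $I$ is simultaneously swept across by the linear extension, yielding an iterate of $f^{(n+1)}$ covering all of $T^{(n+1)}$. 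Combined with the lower bound $h(f^{(n+1)})\geq (\ell\log 3)/2^{n+1}$ inherited from the embedded $3^{\ell}$-horseshoe in the appropriate iterate, this completes the induction.
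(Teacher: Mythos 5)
Your overall plan---induction on the height $n$ with a $3^{\ell}$-fold horseshoe in the base case---is exactly the route the paper points to (it does not reprove the statement, but cites \cite[Lemma 9.2]{Dominik_drzewa} and observes that only the initial $3$-fold map must be replaced by a $3^{\ell}$-fold one). The problem is that your inductive step, as written, does not produce an exact map. If $f^{(n+1)}$ sends $T^{(n)}_0$ onto $T^{(n)}_1$ by an isometry and $T^{(n)}_1$ onto $T^{(n)}_0$ by a conjugate of $g$, then every point of $T^{(n)}_0\cup T^{(n)}_1$ has image in $T^{(n)}_0\cup T^{(n)}_1$, so the interior of the connecting arc $I$ is in the image only of points of $I$ itself. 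Hence for any nonempty open $U$ contained in one of the subtrees, every iterate $\bigl(f^{(n+1)}\bigr)^N(U)$ lies in $T^{(n)}_0\cup T^{(n)}_1$ and can never cover (nor even meet) $\interior I$; the map is not exact, and in fact not even transitive. No choice of the ``linear extension over $I$'' can repair this, because the defect concerns the preimage of $\interior I$, not the image of $I$: your claim that $I$ is ``swept across'' by one more application of $f^{(n+1)}$ has no mechanism behind it.

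The missing idea is the surgery inside the subtrees that the actual construction performs: small basic intervals of the copied lower-level map are re-mapped so that their images cross the connecting edges and reach the root and an endpoint of the tree (this is precisely what survives in Corollary \ref{cor:structureTn}(4) here, the points $p^{(n)},q^{(n)}$ with $f^{(n)}_{\epsilon,\ell}(q^{(n)})=c_\lambda$ and $f^{(n)}_{\epsilon,\ell}(p^{(n)})=c^{(n)}_{0^n}$); only with such pieces mapping over $I$ can every open set eventually blow up to the whole tree. Once you make this modification, your entropy computation also breaks: $\bigl(f^{(n+1)}\bigr)^2$ is no longer conjugate to $g$ on each copy, so the exact equality $h(f^{(n+1)})=h(g)/2$ is lost, and the re-mapped intervals (which must be strongly expanded to cover long paths) introduce an entropy increase that has to be estimated via the Markov/transition-matrix structure and absorbed into the $\epsilon$ budget---this is where your inductive $\epsilon/2$ should actually be spent, rather than in the claim that the bound obtained is ``stronger than required.''
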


\begin{corollary}\label{cor:structureTn}
The map $f^{(n)}_{\epsilon,\ell}\colon T^{(n)}\to T^{(n)}$ provided by  \cite[Lemma 9.2]{Dominik_drzewa} has the following properties:
\begin{enumerate}
\item 
There is a finite set $P^{(n)}\subseteq T^{(n)}$ such that $f$ is  $P^{(n)}$-Markov. Clearly, every point in $P^{(n)}$ is eventually periodic.
    \item Fix $0\le k\le n$. The set of vertices $\{c_\omega\in T^{(n)}:\omega\in\{0,1\}^k \}$ of $T^{(n)}$ labelled with binary words of length $k$ is a single periodic orbit of $f^{(n)}_{\epsilon,\ell}$ such that for every $\omega\in\{0,1\}^k$ we have $c_\omega\in  P^{(n)}$ and 
$f^{(n)}_{\epsilon,\ell}(c_\omega)=c_{\omega\oplus 1}$.  
\item The root $c_\lambda$ is the unique fixed point of $f^{(n)}_{\epsilon,\ell}$ and $c_\lambda\in  P^{(n)}$. 
\item There are $a^{(n)},b^{(n)},d^{(n)},e^{(n)},p^{(n)},q^{(n)}\in P^{(n)}$ such that $[a^{(n)},b^{(n)}]$ and $[d^{(n)},e^{(n)}]$ are disjoint free arcs, 
$p^{(n)}\in\interior[a^{(n)},b^{(n)}]$ and $q^{(n)}\in\interior[d^{(n)},e^{(n)}]$, $f^{(n)}_{\epsilon,\ell}(q^{(n)})=c_\lambda$ and $f^{(n)}_{\epsilon,\ell}(p^{(n)})=c^{(n)}_\omega$ for some $\omega\in\{0,1\}^n$. Without loss of generality, we may assume $\omega=0^n$.
\end{enumerate}
\end{corollary}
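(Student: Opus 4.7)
The plan is to follow the inductive construction of $f^{(n)}_{\epsilon,\ell}$ in \cite[Lemma 9.2]{Dominik_drzewa} and verify properties (1)--(4) stage by stage. Recall that construction: the base map $f^{(1)}_{\epsilon,\ell}\colon [0,1]\to[0,1]$ is piecewise linear, fixes the root $c_\lambda=1/2$, interchanges the endpoints $c_0=0$ and $c_1=1$, and carries a $3^\ell$-fold horseshoe on a controlled central sub-interval (replacing the $3$-horseshoe used when $\ell=1$). The inductive step writes $T^{(n+1)} = T^{(n)}_0 \cup [0,1] \cup T^{(n)}_1$, fixes $1/2$, swaps the endpoints of the middle arc, sends $T^{(n)}_0$ onto $T^{(n)}_1$ by the canonical label-preserving homeomorphism, and sends $T^{(n)}_1$ onto $T^{(n)}_0$ via a conjugate copy of $f^{(n)}_{\epsilon,\ell}$.

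I would first dispatch (1) and (3). For (1), define $P^{(n)}$ inductively as the union of the two embedded copies of $P^{(n-1)}$ together with the endpoints and midpoint of the new connecting arc and the finitely many break-points of the base horseshoe; this set is visibly finite and forward invariant, whence every point is eventually periodic, and linearity on each component of the complement is preserved by construction. For (3), $c_\lambda$ is fixed at every stage by fiat; uniqueness is inherited because $(f^{(n+1)}_{\epsilon,\ell})^2$ restricts on each half-tree to a conjugate copy of $f^{(n)}_{\epsilon,\ell}$, whose only fixed point is the root by the inductive hypothesis, while on the middle arc $(f^{(n+1)}_{\epsilon,\ell})^2$ has $1/2$ as its only fixed point.

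The heart of the argument is (2). Writing a length-$(k+1)$ word as $\omega=\varepsilon w$ with $\varepsilon\in\{0,1\}$ and $|w|=k$, a direct computation gives
\begin{equation*}
0w\oplus 1 = 1w \qquad\text{and}\qquad 1w \oplus 1 = 0(w\oplus 1).
\end{equation*}
The first identity is exactly the action of $f^{(n+1)}_{\epsilon,\ell}$ on a vertex $c_{0w}\in T^{(n)}_0$, which is sent to its twin $c_{1w}\in T^{(n)}_1$ by the canonical homeomorphism; the second matches its action on $c_{1w}\in T^{(n)}_1$, since the restriction of $f^{(n+1)}_{\epsilon,\ell}$ to $T^{(n)}_1$ is a conjugate copy of $f^{(n)}_{\epsilon,\ell}$ landing inside $T^{(n)}_0$, so by the inductive hypothesis the image vertex is labelled $0(w\oplus 1)$. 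Induction on $n$ then yields a single orbit of length $2^k$ on the level-$k$ vertices, for each $0\le k\le n$.

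For (4) I would arrange the distinguished points at the base stage: pick two disjoint non-degenerate free sub-arcs $[a^{(1)},b^{(1)}], [d^{(1)},e^{(1)}]$ inside the $3^\ell$-horseshoe region, with endpoints in $P^{(1)}$, and choose interior points $p^{(1)},q^{(1)}\in P^{(1)}$ whose images under $f^{(1)}_{\epsilon,\ell}$ are, respectively, an endpoint of $T^{(1)}$ and the root $c_\lambda$; such points exist because each linearity piece of the horseshoe is mapped onto the whole of $[0,1]$. For $n>1$, transport these points through the natural embedding of $T^{(1)}$ into an appropriate branch of $T^{(n)}$; the target endpoint can be renamed to $c^{(n)}_{0^n}$ without loss of generality since, by (2), the endpoints of $T^{(n)}$ form a single orbit and an automorphism of $T^{(n)}$ reindexing the endpoints conjugates $f^{(n)}_{\epsilon,\ell}$ to itself up to a relabelling. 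I expect (4) to be the trickiest item, because the positions of $p^{(n)},q^{(n)}$ must be chosen to fit simultaneously inside the Markov structure of (1) and the orbit structure of (2); however, the flexibility built into the horseshoe step at $n=1$ leaves enough room to do this.
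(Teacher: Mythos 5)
The paper proves Corollary \ref{cor:structureTn} only by deferral: it instructs the reader to repeat the inductive proof of \cite[Lemma 9.2]{Dominik_drzewa}, with the initial $3$-horseshoe replaced by a $3^\ell$-horseshoe, and to read the listed properties off that construction. Your argument therefore lives or dies with the faithfulness of your reconstruction of that construction, and the reconstruction fails in the one respect that matters. With the inductive step exactly as you state it ($T^{(n)}_0$ carried homeomorphically onto $T^{(n)}_1$, $T^{(n)}_1$ carried onto $T^{(n)}_0$ by a copy of $f^{(n)}_{\epsilon,\ell}$, and the middle arc carried onto itself with its endpoints exchanged), the middle arc is invariant, so every image of an open subinterval of the middle arc stays inside the middle arc and the map is not topologically exact; moreover the root then satisfies $f^{-1}(c_\lambda)=\{c_\lambda\}$, so no point $q^{(n)}$ as in item (4) can exist at all. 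Since exactness is part of Theorem \ref{odwzorowanie_drzewo_mala_entropia+} and item (4) is the very datum the main construction of the paper uses, your map cannot be the map of the cited lemma: the genuine construction must in addition fold some subinterval across a large part of the tree (this is exactly what the $\epsilon$ of entropy slack pays for), and it is these folds that produce exactness, the finer Markov partition in (1), and the distinguished points in (4). Your verification of (1)--(3), including the correct arithmetic $0w\oplus 1=1w$ and $1w\oplus 1=0(w\oplus 1)$, applies only to the simplified map and would have to be rechecked against the actual definition on the connecting arc.

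Item (4) has a second, independent gap: transporting $p^{(1)},q^{(1)}$ through an embedding of $T^{(1)}$ into a branch of $T^{(n)}$ cannot deliver what is required, because (4) concerns a single application of $f^{(n)}_{\epsilon,\ell}$ whose values must be the global root $c_\lambda$ and a level-$n$ endpoint. On an embedded copy of $T^{(1)}$ a single application of $f^{(n)}_{\epsilon,\ell}$ is not a copy of $f^{(1)}_{\epsilon,\ell}$ (at best a suitable iterate restricted to an invariant piece is), and even granting that, the image of the transported $q^{(1)}$ would be the root of a small target subtree, i.e.\ a vertex of level $n-1$, not $c_\lambda$. Such points must arise from one-step images of basic intervals that stretch from a leaf to the root, again a feature of the folds your inductive step omits. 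Finally, your reduction to $\omega=0^n$ is salvageable, but not quite for the reason you give: the vertex permutation $c_\gamma\mapsto c_{\gamma\oplus 1}$ is compatible with the parent relation, hence extends to a root-fixing homeomorphism of $T^{(n)}$, and conjugating $f^{(n)}_{\epsilon,\ell}$ by a suitable power of it preserves properties (1)--(3) while rotating the image of $p^{(n)}$ along the endpoint orbit; this yields a generally different map with the same listed properties, which is all the ``without loss of generality'' requires.
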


If $f$ is piecewise linear with the slope on each interval of linearity strictly greater than $1$, then there is a constant $s>1$ such that for every arc $J\subseteq \Geh$ we have that $s|J|<|f(J)|$ unless $f(J)$ contains a 
point from  $P^{(n)}$.

\section{Main Theorem}

The proof of the Main Theorem is based on the self-similarity of the Gehman dendrite $\Geh$, which can be decomposed into infinitely many floors, where each floor is a disjoint union of finitely many copies of a binary tree. 
More precisely, we start with any finite binary tree and attach to its endpoints copies of another fixed larger binary tree, forming the next floor of our dendrite. 
Then we get even bigger binary tree, and we attach another floor. This gives us a partition of $\Geh$ into \emph{floors} or \emph{levels} $\fTk$, each of these floors being a disjoint union of $2^{m(k)}$  copies of the same binary tree.

We will define the map on $\mathcal{G}$ by describing it on each level and then modifying it. The modification will add the mixing property as the image of any subset contained in $\mathcal{G}$ will grow with each iteration of the map, but the set $\mathrm{End}(\mathcal{G})$ will remain backward invariant preventing the constructed map from being exact.

We begin by fixing $h_0>0$ and choosing sequences of parameters so that we obtain a strictly increasing sequence of approximate entropies converging to the $h_0$. Then we build an auxiliary map on the Gehman dendrite. 

On each floor, we define a map that cyclically permutes the copies of the tree forming the floor and applies an exact Markov tree map (provided by earlier works) to the last copy before cycling back. This gives a continuous map on the
 set $\Geh\setminus \End(\Geh)$
which extends to the endpoints by continuity. The extended map restricted to $\End(\Geh)$ becomes the dyadic adding machine, which has zero entropy.
By our choice of parameters, the auxiliary map restricted to each floor (which is an invariant set for the auxiliary map) carries entropy close to the corresponding term in the approximating sequence, so the overall entropy of our auxiliary map equals the target value $h_0$ as the entropy is the supremum of entropies of the invariant subsets of the map. However, this map is not mixing, since each floor is invariant. Before we modify the auxiliary map, we use a classical result to equip each floor with a convex metric in which the auxiliary map has a constant slope on the floor. These convex metrics are then combined into a single convex metric on the whole dendrite $\Geh$ so that $\Geh$ has Hausdorff dimension $1$ with respect to it. This metric will be essential for the entropy upper bound.
In the next step, we modify our auxiliary map to make it mixing.  The idea is to make small, carefully controlled modifications on each floor so that orbits can travel between adjacent floors. On each floor, one small interval in the last tree is remapped so that its image reaches into the next floor below. This is done by stretching the map slightly on a single basic interval so that it overshoots into the adjacent floor. Similarly, on each floor (starting from the second), a small interval is remapped so that its image reaches into the floor above.

The modifications are designed so that the slope increase on the affected intervals is bounded by the slope of the next floor, ensuring that the  Lipschitz constant of the modified map does not exceed the target value $h_0$. This ends the construction. It remains  to check that the map has the desired properties. Since the modified map is Lipschitz with respect to the constructed metric, and the Hausdorff dimension of $\Geh$ with respect to that metric is $1$, we can use a theorem of Misiurewicz to see that the topological entropy of the map is at most the logarithm of the Lipschitz constant, which is bounded above by $h_0$. At the same time, on each floor, the set of points whose orbits never escape that floor still supports a subsystem with entropy at least as large as the corresponding approximating value. Since these values converge to $h_0$ and the entropy of the modified map is at least the supremum of these entropies, the entropy is at least $h_0$. The set of endpoints of the Gehman dendrite is closed and backward  invariant for the modified map. 
So no open set contained initially in $\Geh\setminus\End(\Geh)$ can ever be mapped by our map onto the entire dendrite, meaning that our map is not exact.

The expanding nature of the map forces the images of a nonempty open set to eventually cover a basic interval on some floor. Once that happens, the exit modifications propagate the image to neighbouring floors. Since the original map was exact on each floor, the image eventually covers each floor entirely, and once the floor is contained in the image, it stays inside. Repeating this argument floor by floor (upward and downward) shows that the map is topologically mixing.

\begin{theorem}\label{thm:main}
For every $h_0\in(0,\infty]$ 
there exists a pure mixing Gehman dendrite map $F\colon \Geh\to\Geh$ such that $h(F)=h_0$.   
\end{theorem}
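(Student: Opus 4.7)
My plan is to build $F$ in three stages: decompose $\Geh$ into a nested system of finite binary subtrees, define a preliminary map on each tile using Theorem~\ref{odwzorowanie_drzewo_mala_entropia+} as the building block, and finally modify the preliminary map by small redirections to achieve mixing without destroying either the entropy or the inaccessibility of $\End(\Geh)$.

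\emph{Parameters and tiling.} I fix a sequence $(n(k))_{k\ge 1}$ of positive integers, positive integers $\ell_k$, and $\epsilon_k\downarrow 0$ so that the values $\alpha_k:=h(f^{(n(k))}_{\epsilon_k,\ell_k})$ produced by Theorem~\ref{odwzorowanie_drzewo_mala_entropia+} satisfy $\sup_k\alpha_k=h_0$ (and $\alpha_k<h_0$ for every $k$ when $h_0<\infty$). Writing $N(k)=n(1)+\cdots+n(k)$, I tile $\Geh$ as the closure of $\bigcup_{k\ge 1}\fTk$, where $\fTj$ is a copy of $T^{(n(1))}$ rooted at the root of $\Geh$ and, for $k\ge 2$, the set $\fTk$ is a disjoint union of $2^{N(k-1)}$ copies of $T^{(n(k))}$, each glued along its root to a distinct endpoint of $\fTj\cup\cdots\cup\fTkm$. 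Choosing a compatible convex metric on $\Geh$ whose restriction shrinks the diameters of these copies uniformly forces $\bigcup_k\fTk$ to be dense in $\Geh$ with $\Geh\setminus\bigcup_k\fTk=\End(\Geh)$.

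\emph{Preliminary map.} On each copy $C$ of $T^{(n(k))}$ in $\fTk$ I put an identified copy of $g_k:=f^{(n(k))}_{\epsilon_k,\ell_k}$ sending $C$ onto the next copy $C'$ in the cycle of $2^{N(k-1)}$ copies. Corollary~\ref{cor:structureTn}(3) gives that the root of $g_k$ is fixed, and Corollary~\ref{cor:structureTn}(2) describes the cyclic action on endpoints, so the gluing at every attachment point is consistent; this yields a well-defined continuous map with $F(\fTk)=\fTk$ for each $k$. The uniform diameter shrinking extends $F$ continuously to $\End(\Geh)$, where it is conjugate to the dyadic odometer and therefore has zero entropy. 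A standard first-return computation on the cyclically permuted copies inside $\fTk$ gives $h(F|_{\fTk})=\alpha_k$, and combining these through property~(4) of entropy yields $h(F)=\sup_k\alpha_k=h_0$.

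\emph{Mixing, non-exactness, and main obstacle.} As defined so far, $F$ is not even transitive, because each $\fTk$ is forward-invariant and an open set confined to $\fTk$ cannot escape it; this is the crux of the proof. My plan is to perturb $F$ along a very short sub-arc inside each tile---located near a preperiodic interior Markov point provided by Corollary~\ref{cor:structureTn}(4)---redirecting that sub-arc to a designated interior point of $\fTj$, and symmetrically to add sub-arcs in $\fTj$ which reach every higher $\fTk$. I will arrange the perturbation so that: (i) the modified map remains piecewise linear Markov with only a slightly enlarged partition; (ii) the exact Markov system on each $\fTk$ with entropy $\alpha_k$ survives as a subsystem of the modified map, so $h(F)$ is unchanged, and the redirection arcs can be made of any slope we wish without raising the supremum (with Theorem~\ref{lipschitz_entropy_graf} available to bound their contribution); (iii) every redirection lands on an interior point of some tile and never on $\End(\Geh)$. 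Combined with exactness of each $g_k$, this will force any open $U\subseteq\Geh$ to reach $\fTj$ via one redirection, spread throughout $\fTj$, and then via the upward redirections reach every $\fTk$, proving mixing; and since the forward orbit of any open $U$ disjoint from $\End(\Geh)$ still misses $\End(\Geh)$, $F$ remains not exact. Carrying out a single coherent choice of perturbations that simultaneously gives mixing, preserves $h(F)=h_0$, and keeps $\End(\Geh)$ inaccessible is the delicate technical heart of the proof.
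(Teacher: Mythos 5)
Your first two stages essentially reproduce the paper's auxiliary map $G$: the same tiling of $\Geh$ into floors of full binary trees, the building blocks supplied by Theorem~\ref{odwzorowanie_drzewo_mala_entropia+}, cyclic permutation of the copies within a floor, and an adding-machine-type, zero-entropy extension to $\End(\Geh)$. One bookkeeping slip there: if you apply $g_k$ on \emph{every} transition between the copies of floor $k$, then the permutation forced on the roots of the floor-$(k+1)$ copies is $(\omega,\omega')\mapsto(\omega\oplus 1,\omega'\oplus 1)$, which for $k\ge 2$ is not a single cycle, so your ``next copy in the cycle'' prescription is not consistent with the gluing and the endpoint map is not the dyadic odometer (it is a simultaneous blockwise adding machine). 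This is repairable --- entropy on $\End(\Geh)$ is still zero and the first-return computation survives cycle by cycle; the paper instead uses the identity on all transitions but one and compensates by taking the parameter $\ell$ of order $2^{m(k-1)}\ell(k)$ --- but it signals that the consistency you invoke from Corollary~\ref{cor:structureTn} needs checking, not just citing.

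The genuine gap is the third stage, which you state as a plan and whose key claim is not correct as written. That the exact Markov system on each $\fTk$ ``survives as a subsystem, so $h(F)$ is unchanged, and the redirection arcs can be made of any slope we wish'' only gives the \emph{lower} bound $h(F)\ge\alpha_k$: after the redirections the floors are no longer invariant, orbits travel between floors, and uncontrolled slopes on the redirection arcs can create additional crossings (horseshoes spanning several floors) that push the entropy above $\sup_k\alpha_k$. Controlling $h(F)$ from above is exactly the delicate point, and Theorem~\ref{lipschitz_entropy_graf} only helps if you first arrange $\dim_H(\Geh)=1$ and a global Lipschitz constant $s_0$ with $\log s_0=h_0$; your metric, chosen merely to ``shrink diameters uniformly,'' guarantees neither. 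The paper does this by replacing each floor map by a constant-slope model (Alsedà--Misiurewicz), assembling the resulting measures into a geodesic metric with $\dim_H(\Geh)=1$, and performing every modification with explicit slope bookkeeping ($s_k<s\le s_{k+1}<s_0$). Moreover, your wiring scheme --- every $\fTk$ redirected into $\fTj$ and $\fTj$ redirected into every $\fTk$ --- requires infinitely many modification arcs inside the single finite tree $\fTj$; these must accumulate, threatening continuity, the piecewise-linear/Markov structure, and any slope bound on $\fTj$. The paper links only adjacent floors ($k\to k\pm 1$), so each floor is modified finitely often, and mixing is then proved by a growth argument (images of free arcs expand in measure until they contain a $\fPk$-basic interval $J$, after which $F(J)\supseteq G(J)$ and exactness of $G|_{\fTk}$ take over), a step your sketch also leaves unaddressed. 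As it stands, the proposal describes the right strategy but omits, and in part misstates, the technical core that makes it work.
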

\begin{proof} Fix any $h_0>0$. We divided the proof into smaller steps and claims for easier reference.  
\begin{step} Construction of the auxiliary sequences. 
\end{step}

Consider increasing sequences of positive integers 
$(\ell(k))_{k=1}^\infty$ and $(n(k))_{k=1}^\infty$
such that the associated sequence 
\begin{equation}\label{eq:hk}
h_k=\frac{\ell(k)\log (3)}{2^{n(k)}}    
\end{equation}
is strictly increasing and $h_k\nearrow h_0$ as $k\to\infty$. 

We define the sequence $(m(k))_{k=0}^\infty$ inductively: we set $m(0)=0$ and for $k\ge 1$ we set $m(k)=m(k-1)+n(k)$.

\begin{step}\label{step_2}
Construction of an auxiliary continuous map $G\colon\Geh \to \Geh$. 
\end{step}
First, we construct an auxiliary sequence of exact Markov tree maps $(g_k)_{k=1}^\infty$.    
To this end, for each $k\ge 1 $, we use Theorem \ref{odwzorowanie_drzewo_mala_entropia+} to get an exact Markov map $g_k\colon T^{(n(k))}\to T^{(n(k))}$ such that
\begin{equation*}
   \frac{2^{m(k-1)}\ell(k)\log (3)}{2^{n(k)}}\leq h(g_{k})< 
   \frac{2^{m(k-1)}\ell(k+1)\log (3)}{2^{n(k+1)}}. 
\end{equation*}
 We achieve it by taking $g_k=f^{(n(k))}_{\epsilon,L(k)}$, where $L(k)=2^{m(k-1)}\ell(k)$ and 
$\epsilon=2^{m(k-1)}(h_{k+1}-h_k$).
Hence, for $k\ge 1$ we have
\begin{equation}\label{ineq:hk-ent-bounds}
   h_k\leq \frac{1}{2^{m(k-1)}} h(g_{k})< h_k+(h_{k+1}-h_k).
\end{equation}

For every $k\ge 0$ and for every $\omega\in\{0,1\}^{m(k)}$, we denote by  $\Tom$ the subtree of $\Geh$ spanned by $c_\omega$ and $E=\{\gamma\in\{0,1\}^{m(k+1)}:\gamma\restri m(k)=\omega \}$, where $\gamma\restri m(k)=\omega$ means that $\omega$ is the prefix of $\gamma$ of length $m(k)$. 
We note that $\Tom$ is a homeomorphic copy of $\Tnk$. For $x\in\Tnk$ we write $x^{\omega}$ for the corresponding point in $\Tom$. Note that with this convention we have
that $c^\omega_{\omega'}\in \Tom$ corresponds to the vertex  $c_{\omega\omega'}\in\Geh$, where $\omega\omega'$ stands for the concatenation of $\omega$ and $\omega'$. In particular, we identify the vertex $c_\omega$ of $\Geh$ with the root $c^\omega_\lambda$ of $\Tom$. 
We call the disjoint union
\begin{equation*}
\fTk=\bigcup_{\omega\in\{0,1\}^{m(k)}}  \Tom 
\end{equation*}
the $k$-th \emph{floor} of $\Geh$. To shorten our notation, we will write $\firsttree$ for the word $0^{m(k)}$ that labels the first tree  at the floor $k$ and $\lasttree$ for the word $1^{m(k)}$ that labels the last tree  at the floor $k$ (for every $n\ge 1$ we order objects indexed by $\omega\in\{0,1\}^n$ according to the lexicographic order on $\{0,1\}^n$, see Figure \ref{fig:oplus-action}). To keep the notation consistent, we also set $\mathbf{1}(0)=\mathbf{0}(0)=0^{m(0)}$ to be the empty word $\lambda$.

In particular, for $k=1$ we have that the first floor $\mathcal{T}_1$ consists of exactly one tree $T^{\lambda}=T^{(n(1))}$. We define a map $\hat{g}_1\colon\mathcal{T}_1\to\mathcal{T}_1$ 
putting $\hat{g}_1(x)=g_1(x)$ for every  $x\in T^{\lambda}$. 
For each $k\ge 2$ we have that the $k$-th floor $\fTk$ consists of $2^{m(k)}$ isometric disjoint copies of $\Tnk$ indexed by $\omega\in\{0,1\}^{m(k)}$. We define $\hat{g}_k\colon\mathcal{T}_k\to\mathcal{T}_k$ as follows.  
For every $x\in \Tnk$ and $\omega\in\{0,1\}^{m(k)}$ we set 
\begin{equation*}
\hat{g}_k(x^{\omega})=\begin{cases}
x^{(\omega\oplus 1)},&\text{if $\omega\neq \lasttree
$}, \\
g_k(x)^{\firsttree}, &\text{if $\omega=\lasttree
$}.
\end{cases}    
\end{equation*}

\begin{figure}[H]
    \centering
    \label{here:label}
    \begin{tikzpicture}[
    level/.style={sibling distance=50mm/#1},
    level distance=12mm,
    inner node/.style={circle, fill=black, minimum size=2mm, inner sep=0pt},
    leaf node/.style={circle, fill=black, minimum size=2mm, inner sep=0pt}
]

\node[inner node] (root) {}
    child {node[inner node] (0) {}
        child {node[inner node] (00) {}
            child {node[leaf node] (000) {}}
            child {node[leaf node] (001) {}}
        }
        child {node[inner node] (01) {}
            child {node[leaf node] (010) {}}
            child {node[leaf node] (011) {}}
        }
    }
    child {node[inner node] (1) {}
        child {node[inner node] (10) {}
            child {node[leaf node] (100) {}}
            child {node[leaf node] (101) {}}
        }
        child {node[inner node] (11) {}
            child {node[leaf node] (110) {}}
            child {node[leaf node] (111) {}}
        }
    };


\draw[-{Stealth[length=2mm]}, red, thick, loop above] (root) to [out=135, in=45, looseness=8] (root);

\draw[-{Stealth[length=2mm]}, blue, thick] (0) to [bend left=10] node[midway, above] {} (1);
\draw[-{Stealth[length=2mm]}, blue, thick] (1) to [bend left=10] node[midway, below] {} (0);

\draw[-{Stealth[length=2mm]}, green!50!black, thick] (00) to [bend left=15] node[midway, above] {} (10);
\draw[-{Stealth[length=2mm]}, green!50!black, thick] (10) to [bend left=10] node[midway, above] {} (01);
\draw[-{Stealth[length=2mm]}, green!50!black, thick] (01) to [bend left=15] node[midway, above] {} (11);
\draw[-{Stealth[length=2mm]}, green!50!black, thick] (11) to [bend left=10, looseness=0.8] node[midway, below] {} (00);

\draw[-{Stealth[length=2mm]}, orange, thick] (000) to [bend left=15] node[midway] {} (100);
\draw[-{Stealth[length=2mm]}, orange, thick] (100) to [bend left=15] node[midway] {} (010);
\draw[-{Stealth[length=2mm]}, orange, thick] (010) to [bend left=15] node[midway] {} (110);
\draw[-{Stealth[length=2mm]}, orange, thick] (110) to [bend left=15] node[midway] {} (001);
\draw[-{Stealth[length=2mm]}, orange, thick] (001) to [bend left=15] node[midway] {} (101);
\draw[-{Stealth[length=2mm]}, orange, thick] (101) to [bend left=15] node[midway] {} (011);
\draw[-{Stealth[length=2mm]}, orange, thick] (011) to [bend left=15] node[midway] {} (111);
\draw[-{Stealth[length=2mm]}, orange, thick] (111) to [bend left=25, looseness=0.6] node[midway] {} (000);

\end{tikzpicture}

\caption{
The action of $G$ on the vertices of $T^{(3)}$.
}
\label{fig:periodic-vertices}
\end{figure}

A direct inspection shows that the set of roots of trees in the $k$ floor and the set of all endpoints of these trees, that is, the sets
\begin{align*}
    \{c_{\lambda}^\omega:\omega\in \{0,1\}^{m(k)}\} &=\{c_\gamma\in\Geh: \gamma\in\{0,1\}^{m(k)}\},\\
\{c_{\omega'}^\omega:\omega\in \{0,1\}^{m(k)},\ \omega'\in\{0,1\}^{n(k)}\}&=\{c_\gamma\in\Geh: \gamma\in\{0,1\}^{m(k+1)}
\}
\end{align*}
form two periodic orbits for $\hat{g}_k$ such that $\hat{g}_k(c_\gamma)=c_{\gamma\oplus 1}$ in both cases, see Figure~\ref{fig:periodic-vertices}.  
This observation allows us to see that if for $x\in \Geh\setminus\End(\Geh)=\bigcup_k\fTk$ we set
$G(x)=\hat{g}_k(x)$ for $x\in \fTk$ and $k\geq 1$, then we obtain a well defined and continuous map from $\Geh\setminus\End(\Geh)=\bigcup_k\fTk$ to itself.
We will extend this map to a map $G\colon \Geh\to\Geh$. 
For $x\in \End(\Geh)$ there is a unique sequence $\bar{\omega}=\omega_1\omega_2\omega_3\ldots\in\{0,1\}^\infty$ such that the unique arc in $\Geh$ that joins $c_\lambda$ with $x$ passes through vertices 
\[
c_{\omega_1}, \ c_{\omega_1\omega_2}, \ c_{\omega_1\omega_2\omega_3}, \ldots, c_{\omega_1\ldots \omega_n}, \ldots.
\]
Clearly, the vertices forming this sequence converge to the endpoint, that is
\begin{equation*}
    \lim_{n\to\infty} c_{\omega_1\ldots \omega_n}=c_{\bar{\omega}}. 
\end{equation*}
To be consistent with the definition of $G$ on $\bigcup_k \fTk$, we define
\begin{equation*}
G(c_{\bar{\omega}})= \lim_{n\to\infty} G(c_{\omega_1\ldots \omega_n})=\lim_{n\to\infty} c_{\omega_1\ldots \omega_n\oplus 1}.   
\end{equation*}
Note that the above limit exists, because
\begin{equation*}
    (\omega_1\ldots\omega_n\oplus 1)=\begin{cases}
        0^n,& \text{ if }\bar{\omega}=1^n,\\
        0^{j-1}1\omega_{j+1}\ldots\omega_n,&\text{ where } j=\min\{i\ge 1: \omega_i=0\}.
        \end{cases}
\end{equation*}
Therefore $G(c_{\bar{\omega}})=c_{\alpha(\bar{\omega})}$, where
\begin{equation*}
    \alpha(\bar{\omega})=\begin{cases}
        0^\infty,& \text{ if }\bar{\omega}=1^\infty,\\
        0^{j-1}1\omega_{j+1}\omega_{j+2}\ldots,&\text{ where } j=\min\{i\ge 0: \omega_i=0\}.
    \end{cases}
\end{equation*}
It follows that the map $G$ is continuous and $G|_{\End(\Geh)}$ is conjugated to the dyadic adding machine.

From now on, let $G\colon \Geh\to \Geh$ be the map constructed above. 

\begin{claim}\label{claim:ent-of-gk}
For every $k\ge 1$ we have that $h(G|_{\fTk})=h(\hat{g}_k)$ satisfies
\begin{equation}\label{ineq:hat-gk-ent-bounds}
   h_k\leq h(\hat{g}_{k})< h_k+(h_{k+1}-h_k).
\end{equation}  
\end{claim}
\begin{proof}[Proof of Claim \ref{claim:ent-of-gk}] Fix $k\ge 1$. It is easy to see that for each $\omega\in\{0,1\}^{m(k)}$ the subtree $\Tom$ is invariant for $\hat{g}_{k}^{2^{m(k)}}$, that is,
for every $\omega\in\{0,1\}^{m(k)}$ and $x\in \Tnk$ we have
\[
\hat{g}_{k}^{2^{m(k)}}(x^{\omega})=g_{k}(x)^{\omega}.
\]
It follows that $h(\hat{g}_{k}^{2^{m(k)}})=h(g_{k})$, so $h(\hat{g}_{k})=({2^{m(k)}})^{-1}h(g_{k})$. Using \eqref{ineq:hk-ent-bounds} we get that $h(\hat{g}_{k})$ satisfies \eqref{ineq:hat-gk-ent-bounds}. 
\end{proof}
\begin{claim}\label{claim:ent-of-G}
We have $h(G)=h_0$.    
\end{claim}
\begin{proof}[Proof of Claim \ref{claim:ent-of-G}]
Writing $\Geh=\End(\Geh)\cup\bigcup_k\fTk$ we present $\Geh$ as a union of closed and $G$-invariant sets. 
Since $G|_{\End(\Geh)}$ is the dyadic adding machine, we have $h(G|_{\End(\Geh)})=0$. It follows that
\begin{equation*}
    h(G)=\sup_k h(G|_{\fTk})=\sup_k h(\hat{g}_k).
\end{equation*}
To finish the proof, we combine $h_k\nearrow h_0$ with \eqref{ineq:hat-gk-ent-bounds}.
\end{proof}
\begin{step}
Construction of the special 
convex metric 
$ d_\Geh$ on $\Geh$.    
\end{step}

For each $k\ge 1$ we consider $G_k=G|_{\fTk}$. Let $\fTk'$ be a tree obtained by collapsing  the roots of trees in the $k$ floor, that is points in the set 
\begin{equation*}
    \rts_k=\{c_{\lambda}^\omega:\omega\in \{0,1\}^{m(k)}\} =\{c_\gamma\in\Geh: \gamma\in\{0,1\}^{m(k)}\}
\end{equation*}
to a single point $r_k$. Write $\varphi_k\colon\fTk\to\fTk'$ for the projection map. Since $\rts_k$ is a single periodic orbit for $G_k$, we obtain a factor map $\tilde{G}_k$ on the tree $\fTk'$ with the same entropy as $G_k$. Note that $\tilde{G}_k$ is continuous and $r_k$ is the unique fixed point of $\tilde{G}_k$. Now we invoke \cite[Theorem C]{Misiurewicz_slope}, to get a constant slope map from $\fTk'$ to itself conjugated to $\tilde{G}_k$. With a minor abuse of notation, we denote this map also by $\tilde{G}_k$. Furthermore, the slope $s_k$ of $\tilde{G}_k$ satisfies
\begin{equation*}
 h_k\leq \log s_k < h_k+(h_{k+1}-h_k).    
\end{equation*}
In fact, the topological conjugacy given by \cite[Theorem C]{Misiurewicz_slope}
is the identity between $\fTk'$ with the initial metric $\rho$ and $\fTk'$ endowed with some 
convex metric $d_k$ given by a measure. That is, there is a $\tilde{G}_k$-invariant atomless Borel probability measure $\mu'_k$ on $\fTk'$ such that for $x,y\in \fTk'$ we have $d_k(x,y)=\mu'_k([x,y]_{\fTk'})$, where $[x,y]_{\fTk'}$ stands for the unique arc joining $x$ and $y$ in $\fTk'$. Actually, for $x, y\in \fTk'$ the measure $\mu_k'$ satisfies $\mu_k'([x, y]_{\fTk'})=\rho(\varphi_k(x), \varphi_k(y))$, where $\varphi_k$ is the conjugacy map from \cite[Theorem C]{Misiurewicz_slope}.

Since $\mu_k'$ is atomless and $\varphi_k$ is one-to-one on $\fTk\setminus\rts_k$, we can lift $\mu'_k$ to a measure $\mu_k$ on $\fTk$. As a result, for each $\omega\in\{0,1\}^{m(k)}$ we have a 
convex metric $d_{\Tom}$ on $\Tom$ provided by the measure $\mu_k$.

It is now easy to use the measures $(\mu_k)_{k=1}^\infty$ to find a 
convex metric on $\Geh$. For $x,y\in\Geh$ with $x\neq y$ set $[x,y]_\Geh$ to be the unique arc in $\Geh$ whose endpoints are $x$ and $y$. 
Note that for every finite binary word $\omega$, all endpoints of the tree $\Tom$ are contained in the set of  branch points of $\Geh$. Since the set of branch points of $\Geh$ is countable and each branch point is isolated, there is a collection of arcs $(A_j)_{j\in K}$ such that the arc $A_j$ is  the intersection of $[x,y]_{\Geh}$ with some tree $\Tom$ with $\omega\in\{0,1\}^{m(k(j))}$ and $k(j)$ tells us the floor in which $A_j$ is contained. 
These arcs cover $[x,y]_{\Geh}$ with possible exception of at most two points in $[x,y]_{\Geh}\cap \End(\Geh)$.
Therefore, 
\begin{equation*}
[x,y]_{\Geh}=\overline{\bigcup_{j\in K} A_j},
\end{equation*}
Furthermore, $K$ is finite if and only if $x,y\not\in \End(\Geh)$.

The collection $\{A_j:j\in K\}$ is unique up to enumeration of summands.  Note that if $x,y\in \Geh \setminus \End(\Geh)$, then the set $K$ is finite and taking the closure is not needed.
We define a nonatomic Borel probability measure $\mu_\Geh$ on $\Geh$ to be the convex combination of $\mu_k$'s, that is, 
\begin{equation*}
   \mu_\Geh = \sum_{k=1}^\infty \frac{1}{2^{k}}\mu_{k}.
\end{equation*}
It is straightforward to see that the formula
\[
d_{\Geh}(x,y)=\mu_\Geh([x,y]_{\Geh}) =\sum_{j\in K} \frac{1}{2^{k(j)}}\mu_{k(j)}(A_j),
\]
defines a 
convex metric on $\Geh$ such that for each $k\ge 1$ the map $G_k\colon \fTk\to\fTk$ has the constant slope $s_k$ with respect to $d_{\Geh}$. Therefore $G$ is $\sigma$-linear, expanding and has the slope bounded by $s_0=\log (h_0)$.

\begin{claim} Endowing $\Geh$ with $d_\Geh$ we obtain $\dim_H(\Geh)=1$.
\end{claim}It is clear that for any $\delta>0$ and any $k\ge 1$, the tree $T^{(m(s))}=\bigcup_{k\leq s}\fTk \subseteq \Geh$ has a finite cover $\mathcal{U}_s$ by open sets $U$ satisfying $
\diam U<\delta$ such that $\sum_{U\in \mathcal{U}_s}\diam U<\sum_{k=1}^s \mu_\Geh(\fTk)+1/s<\mu_\Geh(\Geh)+1/s$. But $\bigcup_{k>s}
\fTk$ decomposes into $2^{m(s)}$ disjoint dendrites $\Geh_j$, where $1\le j\le 2^{m(s)}$ (these are homeomorphic copies of $\Geh$) such that for large $s$ each of these copies has $d_\Geh$-diameter smaller than $\delta$. 
Furthermore, we have for each $1\le j\le 2^{m(s)}$ that
\begin{equation*}
\diam(\Geh_j)\le\mu_\Geh(\Geh_j)=\frac{1}{2^{m(s)}}\mu_\Geh(\bigcup_{k>s}\fTk)
=\frac{1}{2^{m(s)}}\sum_{k>s}\frac{1}{2^k}.
\end{equation*}
Hence, taking $s$ large and adding to $\mathcal{U}_s$ open sets obtained by removing the root from each $\Geh_j$ for $1\le j\le 2^{m(s)}$, we get a cover $\mathcal{U}$ of $\Geh$ such that
$$
0<\sum_{U\in \mathcal{U}}\diam U\le\sum_{k=1}^s \mu_\Geh(\fTk)+1/s+\sum_{k>s}\frac{1}{2^k}.
$$
This implies $0<\mathcal{H}^1(\Geh)<\infty$, so $\dim_H(\Geh)=1$.

\begin{step}
Construction of $F$.    
\end{step}
 
We are going to modify  $G$ inductively on each floor $\fTk$ to obtain a new map $F$. Since for each $k\ge 1$ the map $g_k\colon \Tnk\to\Tnk$ 
is Markov, for each $k$ there is a finite set $\Pnk\subseteq \Tnk$ inducing a Markov partition into basic intervals for $g_k$. For every $u\in \Pnk$ and $\omega\in\{0,1\}^{m(k)}$ we  
 use the standard notation $u^\omega$ for the copy of $u$ in $\Tom$ and denote 
\begin{equation*}
\fPk=\bigcup_{\omega\in\{0,1\}^{m(k)}}\{u^\omega:u\in \Pnk\}.    
\end{equation*}
As a result we obtain a partition of the floor $\fTk$ into $\fPk$-basic intervals. 

Before we continue, we introduce  one more piece of notation: Given $k\ge 1$, $\omega\in\{0,1\}^{m(k)}$, and $\omega'\in\{0,1\}^{n(k)}$ we write $B^\omega_{\omega'}$ for the $\fPk$-basic interval  in $\Tom$ whose one endpoint is $c^\omega_{\omega'}$. Similarly, we write $B^\omega_{\lambda}$ for the $\fPk$-basic interval whose one endpoint is $c^\omega_{\lambda}\in\Tom$ which is contained in $[c^\omega_{\lambda},c^\omega_0]$. Note that $c^\omega_{\omega'}$ is then the common endpoint of $B^\omega_{\omega'}$ and $B^{\omega\omega'}_{\lambda}$. 

The first series of modifications results in a map $\tF$. We change $G$ on each floor $k\ge 1$, to get a map $\tF$ with some orbits  travelling down the dendrite (from the floor $k$ to $k+1$). Fix $k\ge 1$. We look at the first tree of the next floor, that is, we consider 
$\Takp\in\fTkp$. It is a copy of $\Tnkp$ attached to $\fTk$ by identifying the leftmost endpoint $c^{\firsttree}_{0^{n(k+1)}}$ of $\Tak$ (recall that $\mathbf{0}(0)=\lambda$) with the root $c_\lambda^{\firsttreeplus}$ of $\Takp$.  For simplicity, we denote the leftmost endpoint $c^{\firsttree}_{0^{n(k+1)}}$ of $\Tak$ by $c^{\firsttree}_{\alpha}$. We note that the arc joining the root of $\Takp$ with its leftmost child in $\Takp$, that is, the arc that joins $c_\lambda^{\firsttreeplus}$ with $c_0^{\firsttreeplus}$ 
contains the $\fPkp$-basic interval $B^{\firsttreeplus}_{\lambda}$ whose endpoint is $c_\lambda^{\firsttreeplus}$.
Since the root of $\Takp$ is $2^{m(k)}$-periodic for $G$ 
and $G_{k+1}|_{\fTkp}=\hat{g}_{k+1}$ is exact  and piecewise expanding on $\fTkp$ there is $j\ge 1$ such that
$B_{\lambda}^{\firsttreeplus}\subseteq G^{j}(B_{\lambda}^{\firsttreeplus})$.  It follows that for infinitely many $N$'s we can find $\wkp^N\in B_{\lambda}^{\firsttreeplus}$ such that $G^N([c_\lambda^{\firsttreeplus},\wkp^N])=B_{\lambda}^{\firsttreeplus}$ and for $0\le j<N$ the set $G^j([c_\lambda^{\firsttreeplus},\wkp^N])$  is contained (not necessarily properly) in at most one $\fPkp$-basic interval.
In particular, $G^{N}(\wkp^N)$ is the endpoint of $B_{\lambda}^{\firsttreeplus}$  other than the root $c_\lambda^{\firsttreeplus}$. Since we may take $N$ to be arbitrarily large, we can also have that $\{G^j(\wkp^N):0\le j< N\}\setminus\fPkp$ is nonempty.

Let $\Tonk$ be the rightmost (last) tree of level $\fTk$. 
By Corollary~\ref{cor:structureTn} we know that there is a point $\pok\in\Tonk$ that divides the free arc 
$[\aok,\bok]$ into two basic intervals and satisfies $G(\pok)=G_k(\pok)=c^{\firsttree}_{\alpha}$. Of course, we also have $\pok,\aok,\bok\in\fPk$.
Consider the basic interval $B^{\firsttree}_{\alpha}=B^{\firsttree}_{0^{n(k+1)}}$ with $c^{\firsttree}_{\alpha}\in B^{\firsttree}_{\alpha}$ and  $B^{\firsttree}_{\alpha}\subseteq G([\aok,\pok])$.

We now redefine $G$ over $[\aok,\pok]\subseteq\Tonk$. First, we need large enough $N$ so that the point $\wkp^N\in J$ is such that
\begin{equation*}
    4d_\Geh(\wkp^N,c_\lambda^{\firsttreeplus})=
4\mu_\Geh([\wkp^N,c_\lambda^{\firsttreeplus}])<(s_{k+1}-s_k)\mu_\Geh([\aok,\pok]) 
\end{equation*}
Note that $s_k\mu_\Geh(A)=\mu_\Geh(G(A))$ for every arc $A\subseteq [\aok,\pok]$. 
Since the metric $d_\Geh$ is geodesic, we find a point 
$\baok 
\in [\aok,\pok]$ such that 
\begin{equation*}
d_\Geh(\baok,\pok)=\frac{d_\Geh([\wkp^N,c_\lambda^{\firsttreeplus}])}{s_k}.
\end{equation*}
To get $\tF$ we modify $G$ only on $[\aok,\pok]$. 

In other words, the modified map $\tF$ agrees with $G$ except that on $[\aok,\pok]$ where our new map  
$\tF$ acts linearly and transforms $[\aok,\baok]$ onto $G([\aok,\pok])\cup[c^{\firsttree}_{\alpha},\wkp^N]$. In particular,   $B^{\firsttree}_{\alpha}, [c^{\firsttree}_{\alpha},\wkp^N]\subseteq \tF([\aok,\baok])$ and maps linearly 
$[\baok,\pok]$  onto $[c^{\firsttree}_{\alpha},\wkp^N]$. 
Observe that the \emph{downstairs exit from $k$ floor}, that is the set  
\begin{equation*}
E_*^k=\tF^{-1}([c^{\firsttree}_{\alpha},\wkp^N])\cap \fTk,    
\end{equation*}
which is an interval contained in $[\aok,\pok]$ whose one endpoint is $\pok$ and the other belongs to $\interior[\aok,\baok]$. 

For $k=1$, we take $\mathcal{P}_1'=\mathcal{P}_1$. 
For $k>1$, let $\fPk'\subseteq\fTk$ be the set 
obtained at the end of the previous step of the construction performed on $\fTkm$.
Recall that $\{G^j(\wkp^N):0\le j< N\}\setminus\fPkp$ is nonempty. 
We let $\fPkp'$ to be $\fPkp\cup \{G^j(\wkp^N):0\le j< N\}$. We let $\fSk=\fPk' \cup \{\baokp\}$.

Note that $\tF$  
is still piecewise linear and expanding. Furthermore,
\begin{equation*}
    \mu_\Geh (\tF([\baok,\pok]))=\mu_\Geh([c^{\firsttree}_{\alpha},\wkp^N])=s_k
\mu_\Geh ([\baok,\pok])
\end{equation*}
and
\begin{align*}
\mu_\Geh (\tF([\aok,\baok])&=\mu_\Geh(G([\aok,\pok]))+\mu_\Geh([c^{\firsttree}_{\alpha},\wkp^N])\\
&=s_k \mu_\Geh([\aok,\baok]))+2\mu_\Geh([c^{\firsttree}_{\alpha},\wkp^N])\\
&<s_k\mu_\Geh ([\aok,\baok])+(s_{k+1}-s_k)\frac{\mu_\Geh ([\baok,\pok])}{2}\\
&< s_{k+1}\mu_\Geh ([\aok,\baok]).
\end{align*}
Therefore, the slope of $\tF|_{\fTk}$  
is bounded by above $s_{k+1}<s_0$. 

Note that $\tF$ is $\sigma$-linear and its intervals of linearity are determined by the set
\begin{equation*}
  \mathcal{S}_\infty=  \bigcup_{k=1}^\infty\fSk.
\end{equation*}
Furthermore, for every $k\ge 1$ we have $\fPk\subseteq\fSk$, $F|_{\fPk}=G|_{\fPk}$,  
and for each $x\in \fSk$ there is $j$ such that $\tF^j(x)\in \fPk\cup \fPkp$.

The second set of changes we apply to $\tF$ obtained from the first series of modifications and all floors $\fTk$ for $k\ge 2$. 

Fix $k\ge 2$. For $k=2$, we take $\mathcal{R}_1'=\mathcal{S}_1$. 
For $k>2$, let $\fRkm'\subseteq\fTkm$ be the set 
obtained at the end of the previous step of the construction performed on $\fTkm$.

Again, we look at the first tree $T^{\firsttreeminus}$ of the previous floor $\fTkm$. 
We consider its leftmost endpoint $c^{\firsttreeminus}_{\alpha}=c^{\firsttree}_\lambda$. 
Let $q^{\lasttree}$ be the point in the rightmost (last) tree $T^{\lasttree}$ of the floor $\fTk$ that belongs to a free interval $[d^{\lasttree},e^{\lasttree}]$ and is mapped by 
\begin{equation*}
G|_{[d^{\lasttree},e^{\lasttree}]}=\tF|_{[d^{\lasttree},e^{\lasttree}]}    
\end{equation*}
onto $c^{\firsttreeminus}_{\alpha}=c^{\firsttree}_\lambda$. 

The endpoint $c^{\firsttreeminus}_{\alpha}=c^{\firsttree}_\lambda$ is 
 the endpoint of the $\fPkm$-basic interval $B^{\firsttreeminus}_{\alpha}\subseteq T^{\firsttreeminus}\subseteq\fTkm$. There are infinitely many $j\ge 1$ such that 
$B^{\firsttreeminus}_{\alpha}\subseteq G^{j}(B^{\firsttreeminus}_{\lambda})$. Therefore, there are  $N\ge 1$ and a point $\vkm^N$ such that $G^N(\vkm^N)$ is the 
endpoint of $B^{\firsttreeminus}_{\alpha}$ other than $c^{\firsttree}_\lambda$ and $\vkm^N$ is sufficiently close to  $c^{\firsttreeminus}_{\alpha}$ to guarantee
\begin{equation*}
    4\mu_\Geh([c^{\firsttreeminus}_{\alpha},\vkm^N])\le (s_{k+1}-s_k)\mu_\Geh([d^{\lasttree},q^{\lasttree}]). 
\end{equation*}
Recall that $G^i(\vkm^N)=F^i(\vkm^N)$ for $i=1,\ldots N$ and that $\fSk\cap (d^{\lasttree},e^{\lasttree})=\emptyset$. Therefore, our later modifications of the map $F$ will not alter the set $\fSk$.
Repeating the arguments and calculations from the first step,
we can modify $\tF|_{\fTk}$ on $[d^{\lasttree},q^{\lasttree}]$. To do so, we find $\bar{d}^\lasttree\in\interior[d^{\lasttree},q^{\lasttree}]$ and increase the slope of $\tF$ on $[d^{\lasttree},\bar{d}^{\lasttree}]$ and $[\bar{d}^{\lasttree},q^{\lasttree}]$ to get $F$ such that $F([d^{\lasttree},\bar{d}^{\lasttree}])$ covers $G([d^{\lasttree},q^{\lasttree}])\cup [c^{\firsttreeminus}_\alpha,\vkm^N]$ and $F([\bar{d}^{\lasttree},q^{\lasttree}])$ covers $[c^{\firsttreeminus}_\alpha,\vkm^N]\subseteq T^{\firsttreeminus}\subseteq\fTkm$. Furthermore, the modified $F$ is linear on $[d^{\lasttree},\bar{d}^{\lasttree}]$ and $[\bar{d}^{\lasttree},q^{\lasttree}]$ with the slope $s$ of the modified map $F$ on each interval of linearity contained in $[d^{\lasttree},q^{\lasttree}]$ satisfying $s_k<s\le s_{k+1}$.

Observe that the \emph{upstairs exit from $k$ floor}, that is the set 
\begin{equation*}
    E^*_k=F^{-1}([c^{\firsttreeminus}_{\alpha},\vkm^N])\cap \fTk
\end{equation*}
is an interval contained in $[d^{\lasttree},q^{\lasttree}]$ whose one endpoint is $q^{\lasttree}$ and the other belongs to $\interior[d^{\lasttree},\bar{d}^{\lasttree}]$. 

Note that $\{G^j(\vkm^N):0\le j< N\}\setminus\fSkm$ is nonempty. 
We let $\fRkm$ to be 
$\fSkm\cup \{G^j(\vkm^N):0\le j< N\}$ and set $\fRk'=\fSk \cup \{\bdokm\}$.

Note that for every $k\ge 1$ the map $F|_{\fTk}$ is piecewise linear, expanding, and its slope is bounded by above $s_{k+1}<s_0$. In particular, $F|_{\fTk}$ is linear on each $\fRk$ basic interval. Furthermore, for every $k\ge 1$ we have $\fPk\subseteq\fSk\subseteq\fRk$,  $F|_{\fPk}=\tF|_{\fPk}=G|_{\fPk}$, $F|_{\fSk}=\tF|_{\fSk}$, and for each $x\in \fRk$ there is $j$ such that $F^j(x)\in \fPkm\cup \fPk\cup \fPkp$.

\begin{claim}\label{claim:upper-bound}
We have $h(F)\le h_0$.    
\end{claim}
\begin{proof}[Proof of Claim \ref{claim:upper-bound}.] If $h_0=\infty$, then there is nothing to prove. Assume $h_0<\infty$. Recall that we endowed $\Geh$ with the 
convex metric $d_\Geh$ such that 
$F$ is piecewise linear on each free arc in $\Geh$ and $\dim_H(\Geh)=1$. Furthermore, the slope on each linearity arc of $F$ is bounded above by $s_0>1$ with $\log (s_0)=h_0$.
Therefore, $F$ is $s_0$-Lipschitz on $\Geh$ and we have $h(F)\le h_0$ by Theorem \ref{lipschitz_entropy_graf}.
\end{proof}
\begin{claim}\label{claim:lower-bound}
For every $k\ge 2$ we have $h(F)\ge h_k$.    
\end{claim}
\begin{proof}[Proof of Claim \ref{claim:lower-bound}.] Fix $k\ge 2$. Let $X_k\subseteq\fTk$ be the set of points whose $F$-orbits never leave the floor $\fTk$. That is,
\begin{equation*}
X_k=\fTk\setminus\bigcup_{j=0}^\infty F^{-j}((\baok,\bbok)) \cup  \bigcup_{j=0}^\infty F^{-j}((\bdok,\beok)).   
\end{equation*}
Then $X_k$ is a forward invariant set such that
$G|_{\fTk}=\hat{g}_k$ is a factor of $F|_{X_k}$. The factor map collapses each connected component of each preimage of the interval $E^*_k$ or $E^k_*$ to a point mapped eventually onto the endpoint $c^{\firsttree}_\lambda$, respectively $c^{\firsttree}_\alpha$.
This yields
\begin{equation*}
h(F)\ge h(F|_{X_k})\ge  h(G|_{\fTk})=h(\hat{g}_k)\ge h_k,    
\end{equation*}
which finishes the proof of the Claim.
\end{proof}
Combining Claims \ref{claim:upper-bound} and \ref{claim:lower-bound} with $h_k\nearrow h_0$ as $k\to\infty$, we get $h(F)=h_0$. 
To finish the proof, we need to prove the last Claim:
\begin{claim}\label{claim:pure-mix}
The map $F$ is pure mixing.    
\end{claim}
\begin{proof}[Proof of Claim \ref{claim:pure-mix}.] Fix a nonempty open set $U\subseteq\Geh$. Clearly, $F^{-1}(\End(\Geh))=\End(\Geh)$. Since $\End(\Geh)$ is closed, the nonempty open set  $\Geh\setminus \End(\Geh)$ satisfies $F(\Geh\setminus \End(\Geh))\subseteq \Geh\setminus \End(\Geh)$, so $F$ cannot be exact. It remains to show that $F$ is mixing. Note that $F$ is $\sigma$-linear with the slope greater or equal to $s_1>1$ on each linearity interval, where $\log (s_1)=h_1$. Therefore, there exists a constant $s>1$ such that for every free arc $I\subseteq\Geh$ we have $\mu_\Geh(F(I))>s\mu_\Geh(I)$ unless there is an endpoint $y$ of a linearity interval of $F$ such that $y\in F(I)$. Since every endpoint of every linearity interval of $F$ eventually belongs to $\fPk$ for some $k$,  there are $k$ and a periodic point $c\in \fPk$ such that for all sufficiently large $m$ the set
$F^m(U)$ contains a point belonging to the orbit of $c$. Then $F^m(U)$ must contain at least one free interval $K$ adjacent to $c$. Without loss of generality, we may assume $K\subseteq\fTk$. 
Now, we look at what happens to the images of $K$ under $F^j$ for large $j$. On one hand, for every $j$ 
we have that $F^j(c)=G^j(c)\in F^j(K)\cap\fPk$. On the other hand, each iterate satisfies $\mu_\Geh(F^{j+1}(K))\ge s \mu_\Geh(F^j(K))$ unless $F^{j+1}(K)$ contains a point from $\fRk$ other than $c$. Hence, there is $j\ge 0$ such that $F^j(K)$ contains an $\fRk$-basic interval
and then, increasing $j$ if necessary, we see that for some $j$ the set $F^j(K)$ contains a $\fPk$-basic interval $J$ for $G|_{\fTk}$. By the definition, for each such $J$ we have $F(J)\supseteq G(J)$, so since
 $G|_{\fTk}$ is exact, the whole floor $\fTk$ will be eventually contained in $F^i(U)$ for all sufficiently large $i$.

Now, for $m$ sufficiently large, $F^m(U)$ contains a point belonging to the orbit of $c\in\fTk\cap\fTkp$ together with a free interval $K\subseteq\fTkp$ adjacent to that point. Repeating the above reasoning, we get $\fTkp$ will be eventually contained in $F^i(U)$ for all sufficiently large $i$. Obviously, 
the same reasoning can be applied to the floor $k-1$ (provided $k\ge 2$).  It follows that for every nonempty open set $U\subset\Geh$ and every $k\ge 1$ there is an integer $j(k)$ such that for every $i\ge j(k)$ we have
 $\fTk\subseteq F^i(U)$. Hence, 
 \begin{equation*}
     \bigcup_{i\ge 0}F^i(U)=\Geh\setminus\End(\Geh),
 \end{equation*}
 which implies that $F$ is topologically mixing.
\end{proof}
This completes the proof of the Main Theorem.
\end{proof}

\section{Acknowledgements} 
This paper is a vastly extended version of  Jakub Tomaszewski's Master thesis completed at Jagiellonian University under supervision of Dominik Kwietniak.

The research project was partly supported by the programme “Excellence initiative --- research university" for AGH University of Krakow (Jakub Tomaszewski) and for Jagiellonian University in Kraków (Dominik Kwietniak).
The research of Piotr Oprocha was partly supported by the project CZ.02.01.01/00/23\_021/0008759 supported by EU funds, through the Operational Programme Johannes Amos Comenius.
Piotr Oprocha is grateful to Max Planck Institute for Mathematics in Bonn for its hospitality and financial support.

\bibliographystyle{amsplain}

\end{document}